\documentclass[twoside]{article}%

\usepackage{mathtools}
\usepackage{latexsym}
\usepackage{mathrsfs}
\usepackage{listings}
\usepackage{epigraph}
\usepackage{hyperref}
\usepackage{pgfplots}
\usepackage{appendix}
\hypersetup{
  unicode=true,
  pdfauthor={},
  pdftitle={Redux - Ize Conjecture},
  pdfsubject={},
  pdfkeywords={}; 
}
\usepackage{pst-all}	
\usepackage{xcolor}
\usepackage{enumitem}
\usepackage{todonotes}
\usepackage{amssymb,bm}
\usepackage{amsmath}
\usepackage{amsfonts}
\usepackage{amsthm}
\usepackage{tikz}
\usepackage{epsfig}
\usepackage{verbatim}
\usepackage{float}
\usepackage{tabularx}
\usepackage{tikz-cd}

\providecommand{\U}[1]{\protect\rule{.1in}{.1in}}

\newcolumntype{Y}{>{\raggedleft\arraybackslash}X}
\def\bc{{\mathbb{C}}}

\def\bn{{\mathbb{N}}}

\def\bz{{\mathbb{Z}}}

\def\br{\mathbb R}

\def\vs{\vskip.3cm}
\def\noi{\noindent}
\def\wt{\widetilde}
\def\gdeg{G\text{\rm -deg}}

\def\Om{\Omega}

\def\ker{\text{\rm Ker\,}}

\DeclareMathOperator{\id}{Id}

\definecolor{codegray}{rgb}{0.5,0.5,0.5}
\newrgbcolor{Magenta}{1  0  1}
\definecolor{codegray}{rgb}{0.5,0.5,0.5}
\newrgbcolor{RoyalPurple}{.25  .10  1}
\newrgbcolor{NavyBlue}{.06  .46  1}
\definecolor{cadmiumgreen}{rgb}{0.0, 0.42, 0.24}
\newrgbcolor{Black}{1  1  1}

\lstdefinestyle{mystyle}{
    commentstyle=\color{codegreen},
    keywordstyle=\color{magenta},
    numberstyle=\tiny\color{codegray},
    stringstyle=\color{RoyalPurple},
    basicstyle=\ttfamily\footnotesize,
    breakatwhitespace=false,         
    breaklines=true,                 
    captionpos=b,                    
    keepspaces=true,                 
    numbers=left,                    
    numbersep=5pt,                  
    showspaces=false,                
    showstringspaces=false,
    showtabs=false,                  
    tabsize=2
}
  \lstdefinelanguage{GAP}{
    basicstyle=\ttfamily,
    keywords={true, false, function, return, fail, if, in, while, do, od, else, elif, fi, break, continue},
    keywordstyle=\color{NavyBlue}\bfseries,
    otherkeywords={
      >, <, ==
    },
    breaklines=true,      
    identifierstyle=\color{black},
    sensitive=True,
    comment=[l]{\#},
    commentstyle=\color{cadmiumgreen},
    stringstyle=\color{black},
    morestring=[b]',
    morestring=[b]"
  }
\newtheorem{theorem}{Theorem}[section]
\newtheorem{proposition}[theorem]{Proposition}
\newtheorem{lemma}[theorem]{Lemma}
\newtheorem{corollary}[theorem]{Corollary}

\newtheorem{definition}[theorem]{Definition}
\newtheorem{remark}[theorem]{Remark}
\newtheorem{example}[theorem]{Example}

\usepackage{subcaption}

\begin{document}

\title{The Ize Conjecture Redux: A Parity Criterion for Global Equivariant Bifurcation Guarantees}

\author{
Ziad Ghanem\thanks{\small Department of Mathematics, Brandeis University, Waltham, MA 02453, USA
}}
\date{}

\maketitle

\begin{abstract}
The Ize Conjecture proposed that every absolutely irreducible representation of a compact Lie group admits a maximal isotropy subgroup with an odd-dimensional fixed-point space, which would provide a universal bifurcation guarantee via the equivariant degree. Its disproof by Lauterbach and Matthews necessitates a more targeted criterion. We introduce Ize pairs---pairs $(G, V)$ for which some maximal isotropy subgroup $H$ satisfies $\dim V^H - \dim V^G \equiv 1 \pmod{2}$---and prove that this dimension-parity condition completely captures the algebraic obstruction to a non-trivial equivariant degree change at 
maximal orbit types. Integrating this criterion with a mod-2 equivariant spectral flow yields local and global bifurcation guarantees without recourse to Burnside ring arithmetic. As an application, we establish unbounded branches of non-stationary periodic solutions in a $\Gamma$-symmetric coupled oscillator network, where the bifurcation guarantees follow entirely from the crossing parity of the linearization at the boundary of a regular parameter window.
\end{abstract}

\noi \textbf{Mathematics Subject Classification:} Primary: 34C25, 37C81, 47H11, 55M25, 19A22

\medskip

\noi \textbf{Key Words and Phrases:}  symmetric equation, existence of solutions, equivariant
Leray-Schauder degree, nonlinear analysis, equivariant bifurcation.

\setlength{\epigraphwidth}{0.65\textwidth}
\epigraph{We must concede that the universe we see is a ceaseless creation, evolution, and destruction of forms and that the purpose of science is to foresee this change of form and, if possible, explain it.}{Ren\'e Thom, Structural Stability and Morphogenesis}

\section{Introduction} \label{sec:introduction}
Symmetry breaking is an organizing principle in nature: circular fluid layers heated from below buckle into convection rolls \cite{GSS1988, Rudge2024}; homogeneous elastic rods under compression snap into preferred bending planes \cite{GSS1988, Vanderbauwhede1980}; rings of identical oscillators spontaneously synchronize into traveling waves \cite{GolStew2002}. In each case, a highly symmetric equilibrium gives way to a less symmetric state, and the \textit{structure} of the underlying symmetry group dictates which new patterns can emerge.  Making this principle rigorous---and, ideally, algorithmic---has driven more than four decades of research in equivariant bifurcation theory \cite{survey, book-new, AED, ChossatLauter2000, GSS1988, GolStew2002}.
\vs

While the simplest manifestation of symmetry breaking is the loss of stability of a stationary equilibrium, the same group-theoretic mechanism governs the emergence of dynamic patterns such as periodic orbits and traveling waves. For expository clarity, we begin with the stationary case. Let $G$ be a compact Lie group acting orthogonally on a finite-dimensional state space $V$, and consider a parameterized system 
\begin{equation}\label{eq:intro_eq}
    \frac{dx}{dt} = f(x, \lambda),
\end{equation}
where $f\colon V \times \br \to V$ is a smooth $G$-equivariant vector field satisfying:
\begin{enumerate}
  \item[(T)] \textbf{Trivial branch:} $f(0,\lambda) = 0$ for all
    $\lambda \in \br$;
  \item[(C)] \textbf{Critical point:} there exists an isolated
    $\lambda_0 \in \br$ at which
    $Df_x(0,\lambda_0)\colon V \to V$ is singular.
\end{enumerate}
Stationary equilibria of \eqref{eq:intro_eq} are the solutions of the algebraic bifurcation problem
\begin{equation}\label{eq:intro_bif}
  f(x,\lambda) = 0.
\end{equation}
The Equivariant Branching Lemma (EBL) (Cicogna \cite{Cicogna1981}; Vanderbauwhede \cite{Vanderbauwhede1980}) illustrates the central role of fixed-point subspaces in this setting: if $G$ acts absolutely irreducibly on~$V$ and an isotropy subgroup $H \leq G$ satisfies $\dim V^H = 1$, then a unique branch of $H$-symmetric solutions to \eqref{eq:intro_bif} bifurcates from the trivial solution at $(\lambda_0,0)$, assuming the critical eigenvalue crosses zero transversally. The power of this result lies in its convenience: no detailed knowledge of the nonlinearity~$f$ is required beyond (T) and~(C); the only essential input is the \textit{geometry of the group action}, encoded in the dimension of the $H$-fixed-point space. However, the EBL has two significant limitations. First, it is a local result: the Implicit Function Theorem guarantees a solution branch only in an arbitrarily small neighborhood of the bifurcation point, with no control over the branch's global continuation. Second, the EBL detects only stationary bifurcations from equilibria; it does not extend to the emergence of periodic orbits or other dynamic patterns, which require functional-analytic tools.
\vs
A more general framework is provided by the equivariant degree theory. By replacing the Implicit Function Theorem with topological invariants---specifically, the equivariant Krasnosel'skii theorem and the Rabinowitz alternative---one can accommodate infinite-dimensional phase spaces and provide global continuation guarantees: if the equivariant degree changes across an isolated critical point, the bifurcating branch cannot spontaneously terminate in the interior of the phase space, but must either grow unbounded or reconnect to the trivial branch at another critical point. In this setting, a $G$-symmetric bifurcation problem is reformulated as a parametrized operator equation
\[
  \mathscr{F}(\alpha,u) = 0, \qquad
  \mathscr{F}\colon \br \times \mathscr{H} \to \mathscr{H},
\]
where $\mathscr{H}$ is an isometric Banach $G$-representation and $\mathscr{F}$ is a completely continuous $G$-equivariant field satisfying $\mathscr{F}(\alpha,0) = 0$ for all $\alpha \in \br$ (cf.\ Section~\ref{sec:bifurcation_setup}). Supposing that $\mathscr{F}$ admits a linearization $\mathscr{A}(\alpha) := D_u\mathscr{F}(\alpha,0)\colon \mathscr{H} \to \mathscr{H}$, and that $\alpha_0 \in \br$ is an isolated parameter value at which $\mathscr{A}(\alpha_0)$ is singular, flanked by regular values $\alpha_0^- < \alpha_0 < \alpha_0^+$, the \emph{local bifurcation invariant} at $(\alpha_0,0)  \in \br \times \mathscr H$ is the Burnside ring element
\begin{equation*}
  \omega_G(\alpha_0)
  := \gdeg(\mathscr{A}(\alpha_0^-), B(\mathscr{H}))
   - \gdeg(\mathscr{A}(\alpha_0^+), B(\mathscr{H}))
  \;\in A(G).
\end{equation*}
By the equivariant Krasnosel'skii theorem (Theorem~\ref{thm:abstract_local_bif}), whenever an orbit type $(H)$ carries a non-zero coefficient in $\omega_G(\alpha_0)$, a continuum of non-trivial solutions with symmetries at least $(H)$ emerges from the trivial branch at $(\alpha_0,0)$.
\vs
In practice, evaluating the local bifurcation invariant requires decomposing each degree \\ $\gdeg(\mathscr{A}(\alpha_0^\pm), B(\mathscr{H}))$ into a product of basic degrees---defined for each irreducible $G$-representation $\mathcal V$ as $\gdeg(-\id, B(\mathcal V)) \in A(G)$---and computing their difference within the multiplicative structure of the Burnside ring (for examples of this standard equivariant-degree pipeline for both existence and bifurcation problems, see \ \cite{BHKL, BBKX, BHKR, BalBurnett, BalChen, survey, book-new, AED, Duan, Eze2022, Carlos2019, Ghanem_annales, Ghanem2024, Krawcewicz2017}). For symmetry groups arising in physical applications, this computation involves recurrence formulae that must be evaluated sequentially across the isotropy lattice, a process whose complexity grows rapidly with the number of orbit types. The present paper circumvents this computation for maximal orbit types by exploiting a structural simplification: at the top of the isotropy lattice, the recurrence formula collapses, and the coefficient of a maximal orbit type $(H)$ in the equivariant degree depends only on the parity of the negative spectrum restricted to $H$-fixed-point subspaces (Lemma~\ref{lemm:nontriviality_conditions}). This collapse separates the spectral data of the operator---which varies with the parameter---from a fixed geometric property of the group action: the parity of $\dim \mathcal{V}^H$ for each irreducible subrepresentation $\mathcal{V}$. Thus, two data points govern the detection of maximal symmetry breaking at the top of the isotropy lattice: the spectral parity of the linearization, and the dimensional parity of maximal fixed-point spaces in the relevant irreducible representations. Whether the latter is automatically satisfied leads to the following natural question: \textit{does every absolutely irreducible representation admit a maximal isotropy subgroup with an odd-dimensional fixed-point space?}
\vs
This is precisely what the Ize Conjecture asserts. Posed by Ize (cf.\ Field~\cite{Field1996}), the conjecture proposed that every absolutely irreducible real representation of a compact Lie group admits at least one maximal isotropy subgroup with an odd-dimensional fixed-point space---guaranteeing, if true, that spectral parity alone would suffice for bifurcation guarantees. However, Lauterbach and Matthews~\cite{Lauter2010, Lauter2015} disproved the conjecture by constructing absolutely irreducible representations in $\br^4$ and $\br^8$ where every maximal isotropy subgroup has an even-dimensional fixed-point space. Although the disproof removes the prospect of a universal guarantee, the equivariant degree methodology remains highly effective because its success does not require the parity condition to hold for every conceivable representation, but only for the specific representations dictated by the physical system under investigation. By verifying this geometric condition for the relevant symmetry group, the degree-theoretic machinery proceeds unhindered. This observation motivates the central concept of this paper.
\begin{definition}\label{def:ize_pair}
  Let $G$ be a compact Lie group with the irreducible representations $\operatorname{Irr}(G) = \{ \mathcal V_i \}_{i \in \mathcal I}$, where $\mathcal V_0 \simeq \br$ denotes the trivial irreducible representation, and let $\mathbb{V}$ be an orthogonal $G$-representation with $G$-isotypic decomposition $\mathbb{V} \simeq \bigoplus_{i \in \mathcal{I}} m_i \mathcal{V}_i$. The pair $(G, \mathbb{V})$ is called an \textbf{Ize pair} if there exist a closed subgroup $H \leq G$ corresponding
  to a \textbf{maximal} orbit type in $\Phi_0(G;\mathbb{V})$ and an \textbf{odd} number of
  indices $i \in \mathcal{I} \setminus \{0\}$ satisfying
  \begin{enumerate}
    \item[(i)] $m_i \equiv 1 \pmod{2}$; \textup{AND}
    \item[(ii)] $\dim \mathcal{V}_i^H \equiv 1 \pmod{2}$.
  \end{enumerate}
\end{definition}
\begin{remark}\rm 
 In Section~\ref{sec:maximality}, we prove that these two conditions collapse into a single checkable criterion: the pair $(G, \mathbb{V})$ is an Ize pair if and only if $\dim \mathbb{V}^H - \dim \mathbb{V}^G \equiv 1 \pmod{2}$ for some maximal isotropy subgroup $H$ (Theorem~\ref{thm:parity_guarantee}).   
\end{remark}
To connect this algebraic criterion to the detection of bifurcation, we observe that the same parity structure governs the non-equivariant theory. In the classical setting, if an odd number of eigenvalues of the linearization cross the origin across a critical parameter value, the Leray--Schauder degree changes sign, and Krasnosel'skii's theorem guarantees bifurcation. The equivariant generalization requires tracking these crossings across individual $G$-isotypic components. The classical spectral flow, introduced by Atiyah, Patodi, and Singer~\cite{Atiyah1976} for self-adjoint Fredholm operators, was generalized by Izydorek, Janczewska, and Waterstraat~\cite{Izydorek2021} into a $G$-equivariant spectral flow taking values in the representation ring $RO(G)$ to detect periodic solutions in Hamiltonian systems. While powerful, their framework relies on finite-dimensional reduction and variational structures, limiting it to local bifurcation guarantees.
\vs
In this paper, we adapt the equivariant spectral flow to non-self-adjoint,  degree-theoretic settings. At each regular parameter value $\alpha$, the negative eigenspace $E^-(\alpha)$ of the linearization $\mathscr{A}(\alpha)$ is a finite-dimensional $G$-representation admitting a $G$-isotypic decomposition $E^-(\alpha) \simeq \bigoplus_{i \in \mathcal{I}} c_i(\alpha)
\mathcal{V}_i$, where $c_i(\alpha)$ counts the number of negative eigenvalues in the $i$-th $G$-isotypic component $\mathscr H_i \subset \mathscr H$ (modeled on the corresponding irreducible $G$-representation $\mathcal V_i$). The \emph{crossing parity} $\rho_i \in \{0,1\}$ records the net change (modulo~2) in these counts across an interval $[a,b]$ with regular endpoints and induces the \emph{spectral flow representation}
\[
  \mathbb{V}_{[a,b]}
  := \bigoplus_{i \in \mathcal{I}} \rho_i \mathcal{V}_i, \quad \rho_i \equiv c_i(a) - c_i(b) \pmod{2}.
\]
Our main result establishes that the Ize pair condition, when applied to the spectral flow representation $\mathbb{V}_{[a,b]}$ rather than to the ambient space $\mathscr{H}$, is sufficient for the non-vanishing of the local bifurcation invariant at maximal orbit types. More precisely, if $(G, \mathbb{V}_{[a,b]})$ constitutes an Ize pair with respect to a maximal orbit type $(H) \in \Phi_0(G; \mathbb{V}_{[a,b]})$, then $\operatorname{coeff}^H(\gdeg(\mathscr A(a), B(\mathscr H)) - \gdeg(\mathscr A(b), B(\mathscr H))) \neq 0$ (Proposition~\ref{prop:spectral_flow_evaluation}). Combined with Krasnosel'skii's theorem and the Rabinowitz alternative, this observation yields global, unbounded branches of non-trivial solutions with symmetries at least $(H)$ (see Theorems~\ref{thm:local_bif_ize} and \ref{thm:global_bif_ize}). The entire verification reduces to a single algebraic check---the parity of the difference $\dim \mathbb{V}_{[a,b]}^H - \dim \mathbb{V}_{[a,b]}^G$---evaluated from the spectral data at the boundaries of a regular parameter window.
\vs
\noindent\textbf{Organization.}
In Section~\ref{sec:maximality}, we establish the Parity Criterion characterization of Ize pairs (Theorem~\ref{thm:parity_guarantee}). In Section~\ref{sec:bifurcation_setup}, we embed this criterion into the parameterized equivariant degree framework, establishing rigorous local (Theorem~\ref{thm:local_bif_ize}) and global (Theorem~\ref{thm:global_bif_ize}) bifurcation guarantees through the mod-2 crossing parity of the linearization. In Section~\ref{sec:bifurcation_application}, we apply this machinery to a symmetric network of coupled oscillators, establishing the existence of unbounded branches of non-stationary periodic solutions, and provide a step-by-step worked-out example explicitly constructing the compliant index set for the case of $\Gamma = D_8$. Appendix~\ref{sec:product_geometry} details the fixed-point geometry in product groups required for these explicit computations. Appendix~\ref{sec:appendix} provides the axiomatic framework of the $G$-equivariant Leray--Schauder degree, and Appendix~\ref{sec:appendix_esf} develops the mod-2 $G$-equivariant spectral flow.

\section{A Complete Characterization of Ize Pairs} \label{sec:maximality}
The definition of an Ize pair (Definition~\ref{def:ize_pair}) involves two simultaneous conditions on the isotypic data.  In this section, we show that both conditions collapse into a single, checkable criterion: the parity of the difference $\dim \mathbb V^H - \dim \mathbb V^G$.
\vs
Let $G$ be a compact Lie group and assume that we have access to a complete list of the irreducible $G$-representations $\operatorname{Irr}(G) = \{\mathcal V_i\}_{i \in \mathcal I}$, where $\mathcal V_0 \simeq \br$ indicates the irreducible representation on which $G$ acts trivially. Let $\mathbb V$ be an orthogonal $G$-representation with the $G$-isotypic decomposition
\[
\mathbb V \simeq \bigoplus_{i \in \mathcal I} m_i \mathcal V_i.
\]
\begin{theorem}[Parity Criterion]\label{thm:parity_guarantee}
An isotropy subgroup $H \leq G$ satisfies the parity condition
\begin{equation*}
    \dim \mathbb V^H - \dim \mathbb V^G \equiv 1 \pmod{2},
\end{equation*}
if and only if there exist an \textbf{odd} number of non-trivial $G$-isotypic indices $i \in \mathcal I \setminus \{0\}$ satisfying
\begin{enumerate}\item[(i)] $m_i \equiv 1 \pmod{2}$ AND
\item[(ii)] $\dim \mathcal V_i^H \equiv 1 \pmod{2}$.
\end{enumerate}
\end{theorem}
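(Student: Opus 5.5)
The plan is to compute both sides modulo $2$ directly from the isotypic decomposition, since fixed-point functors are additive. For any closed subgroup $K \leq G$, taking $K$-fixed points commutes with finite direct sums, so
\[
\dim \mathbb V^K = \sum_{i \in \mathcal I} m_i \dim \mathcal V_i^K .
\]
Applying this with $K = H$ and with $K = G$ gives the two quantities that appear in the parity condition.

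The key observation concerns the term with $K = G$. For an irreducible representation $\mathcal V_i$, the fixed subspace $\mathcal V_i^G$ is a $G$-invariant subspace on which $G$ acts trivially. By irreducibility, $\mathcal V_i^G$ is either $0$ or all of $\mathcal V_i$. In the latter case $\mathcal V_i$ is a trivial irreducible representation, hence one-dimensional and isomorphic to $\mathcal V_0$. Consequently $\dim \mathcal V_i^G = 0$ for $i \neq 0$ and $\dim \mathcal V_0^G = 1$, so $\dim \mathbb V^G = m_0$. Similarly $\mathcal V_0^H = \mathcal V_0$, so the $i=0$ summand of $\dim \mathbb V^H$ is exactly $m_0$. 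Subtracting, the trivial isotypic component cancels:
\[
\dim \mathbb V^H - \dim \mathbb V^G = \sum_{i \in \mathcal I \setminus\{0\}} m_i \dim \mathcal V_i^H .
\]
The sum is finite, since only finitely many $m_i$ are nonzero.

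Next I reduce modulo $2$. A product $m_i \dim \mathcal V_i^H$ is odd exactly when both factors are odd, that is, exactly when conditions (i) and (ii) hold. Hence
\[
\dim \mathbb V^H - \dim \mathbb V^G \equiv \#\{\, i \in \mathcal I\setminus\{0\} : m_i \text{ odd and } \dim \mathcal V_i^H \text{ odd}\,\} \pmod 2 .
\]
The left side is $\equiv 1$ if and only if this count is odd, which is precisely the statement of the theorem. Both directions follow at once, and no hypothesis on $H$ (being an isotropy subgroup, or maximal) is actually used.

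The only step needing care is the vanishing of $\mathcal V_i^G$ for nontrivial irreducibles. This depends on the convention that $\operatorname{Irr}(G)$ lists pairwise non-isomorphic irreducibles, with $\mathcal V_0$ the unique trivial one. I would state this convention explicitly, since otherwise the cancellation of the trivial component could fail.
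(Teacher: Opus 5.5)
Your proposal is correct and follows essentially the same route as the paper: expand $\dim \mathbb V^H$ and $\dim \mathbb V^G$ additively over the isotypic components, cancel the trivial component using $\dim \mathcal V_0^H = \dim \mathcal V_0^G = 1$ and $\dim \mathcal V_i^G = 0$ for $i \neq 0$, and reduce modulo $2$ using the fact that a product is odd exactly when both factors are odd. The paper simply asserts that $\mathcal V_i^G = 0$ for $i \neq 0$; your irreducibility justification of this fact, and your remark that neither the isotropy nor the maximality hypothesis on $H$ is used, are both accurate additions.
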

\begin{proof}
Since $\mathcal V_0$ is the trivial representation, $\dim \mathcal V_0^H = \dim \mathcal V_0^G = 1$, and for any non-trivial irreducible $G$-representation with $i \neq 0$, one has $\dim \mathcal V_i^G = 0$. The $H$-fixed-point space decomposes isotypically as
$\mathbb V^{H} \simeq \bigoplus_{i \in \mathcal I}
(\mathcal{V}_i^{H})^{\oplus m_i}$, so the difference in fixed-point dimensions is given by
\[
  \dim \mathbb V^{H} - \dim \mathbb V^{G} = \sum_{i \in \mathcal I}
    m_i \cdot \dim \mathcal{V}_i^{H} - \sum_{i \in \mathcal I}
    m_i \cdot \dim \mathcal{V}_i^{G} = \sum_{i \in \mathcal I \setminus \{0\}}
    m_i \cdot \dim \mathcal{V}_i^{H}.
\]
A sum of non-negative integers is odd if and only if an odd number of its terms are odd, and a product of two integers is odd if and only if both factors are odd. Therefore the difference $\dim \mathbb V^H - \dim \mathbb V^G$ is odd if and only if an odd number of indices $i \in \mathcal I \setminus \{0\}$ satisfy both $m_i \equiv 1$ and $\dim \mathcal V_i^H \equiv 1 \pmod 2$, which is precisely the conjunction of~(i) and~(ii).
\end{proof}

\begin{corollary}[Characterization of Ize Pairs]
A pair $(G, \mathbb V)$ is an \textbf{Ize pair} if and only if there exists a subgroup $H \leq G$ corresponding to a \textbf{maximal} orbit type in the isotropy lattice $\Phi_0(G; \mathbb V)$, such that the difference $\dim \mathbb V^H - \dim \mathbb V^G$ is \textbf{odd}.
\end{corollary}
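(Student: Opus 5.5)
The corollary follows by combining Definition~\ref{def:ize_pair} with Theorem~\ref{thm:parity_guarantee}. I will prove the two directions separately, keeping the same subgroup $H$ throughout. The point to notice is that Definition~\ref{def:ize_pair} is a conjunction of two requirements. The first is that $H$ represent a maximal orbit type in $\Phi_0(G;\mathbb V)$. The second is that an odd number of indices $i \in \mathcal I\setminus\{0\}$ satisfy (i) and (ii). Theorem~\ref{thm:parity_guarantee} speaks only to the second requirement, and it is stated for any isotropy subgroup $H$.

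\emph{($\Rightarrow$)} Suppose $(G,\mathbb V)$ is an Ize pair. Choose $H$ as in Definition~\ref{def:ize_pair}. Then $(H)$ is a maximal orbit type in $\Phi_0(G;\mathbb V)$, so in particular $H$ is an isotropy subgroup of $\mathbb V$. The counting condition of Theorem~\ref{thm:parity_guarantee} holds for this $H$, and the ``if'' direction of that theorem gives that $\dim\mathbb V^H-\dim\mathbb V^G$ is odd.

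\emph{($\Leftarrow$)} Suppose $H$ represents a maximal orbit type and $\dim\mathbb V^H-\dim\mathbb V^G$ is odd. Since $(H)\in\Phi_0(G;\mathbb V)$, $H$ is an isotropy subgroup. The ``only if'' direction of Theorem~\ref{thm:parity_guarantee} then produces an odd number of non-trivial indices satisfying (i) and (ii). Together with the maximality of $(H)$, this is exactly Definition~\ref{def:ize_pair}.

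The only step needing care is a bookkeeping point. Theorem~\ref{thm:parity_guarantee} is formulated for isotropy subgroups, whereas the corollary quantifies over subgroups representing maximal orbit types. I would note explicitly that membership in $\Phi_0(G;\mathbb V)$ means $(H)$ is an orbit type of $\mathbb V$. Hence some representative of $(H)$ is an isotropy subgroup, and the hypothesis of the theorem is met.

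I would also remark that the parity of $\dim\mathbb V^H$ does not depend on the choice of representative within $(H)$. Indeed, $\mathbb V^{gHg^{-1}}=g\mathbb V^H$, so conjugate subgroups have fixed-point spaces of equal dimension. This makes the statement well posed at the level of orbit types. There is no real obstacle beyond this.
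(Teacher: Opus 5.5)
Your proof is correct and matches the paper, which states this corollary without a separate proof as the immediate combination of Definition~\ref{def:ize_pair} with Theorem~\ref{thm:parity_guarantee}, applied to the same maximal $H$ in both directions. Your bookkeeping remark is harmless, since the proof of Theorem~\ref{thm:parity_guarantee} never actually uses that $H$ is an isotropy subgroup. Your conjugation-invariance observation ($\mathbb V^{gHg^{-1}}=g\mathbb V^H$) is a detail the paper leaves implicit.
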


The Parity Criterion reduces the verification of the Ize pair property from an analysis of individual isotypic components to a single dimension count.  In the next section, we show how this criterion interfaces with the spectral data of a parametrized $G$-equivariant operator to produce both local and global bifurcation guarantees.

\section{Equivariant Bifurcation For Ize Pairs} \label{sec:bifurcation_setup}
The Parity Criterion of Section \ref{sec:maximality} is a statement about representations; it says nothing about the parameterized operators whose spectral changes drive bifurcation. To bridge this gap, we now embed the criterion into the degree-theoretic framework, showing that the spectral data at regular parameter values naturally assembles into a $G$-representation to which the Parity Criterion can be applied.
\vs
Let $G$ be a compact Lie group with irreducible representations $\operatorname{Irr}(G) = \{\mathcal V_i\}_{i \in \mathcal I}$, and let $\mathscr H$ be an isometric Banach $G$-representation admitting the $G$-isotypic decomposition
\[
  \mathscr H = \overline{\bigoplus_{i \in \mathcal I} \mathscr H_i},
  \quad \mathscr H_i \simeq m_i \mathcal V_i.
\]
where each $G$-isotypic component $\mathscr H_i$ is modeled on some number $m_i \in \bn \cup \{0\}$ of copies---its $G$-isotypic multiplicity---of the corresponding irreducible $G$-representation $\mathcal V_i$.
\vs
Let $\mathscr F: \br \times \mathscr H \to \mathscr H$ be a $G$-equivariant, completely continuous field satisfying $\mathscr F(\alpha,0) = 0$ for all $\alpha \in \br$ and consider the one-parameter bifurcation problem
\begin{equation}\label{eq:bifurcation_problem}
    \mathscr F(\alpha,u) = 0, \quad (\alpha,u) \in \br \times \mathscr H.
\end{equation}
Suppose that $\mathscr F$ admits a Fr\'echet derivative $\mathscr A(\alpha):=D_u \mathscr F(\alpha,0) : \mathscr H \to \mathscr H$ depending continuously on the bifurcation parameter $\alpha \in \br$. By Schur's Lemma, this linearization respects the $G$-isotypic decomposition
\[
\mathscr A(\alpha) = \bigoplus_{i \in \mathcal I} \mathscr A_i(\alpha), \quad \mathscr A_i(\alpha) := \mathscr A(\alpha)|_{\mathscr H_i}: \mathscr H_i \to \mathscr H_i.
\]
We write $M := \{(\alpha,0) : \alpha \in \br\}$ for the
\emph{trivial solution set} and $ \mathscr S := \{(\alpha,u) \in \br \times \mathscr H \setminus\{0\} :
       \mathscr F(\alpha,u) = 0\}$ 
for the set of \emph{non-trivial solutions}.  For any closed subgroup $H \leq G$, we denote by $\mathscr S^H := \{(\alpha,u) \in \mathscr S : G_u \geq H\}$
the subset of non-trivial solutions with \emph{symmetries at least~$(H)$}.
\subsection{The Local Bifurcation Invariant and Krasnosel'skii's Theorem}\label{sec:local-bif-inv}
We recall the standard terminology of parametrized bifurcation theory (cf.~\cite{book-new, AED}).  A trivial solution $(\alpha_0,0) \in M$ is a \emph{bifurcation point} for \eqref{eq:bifurcation_problem} if every neighborhood of $(\alpha_0,0)$ has nonempty intersection with~$\mathscr S$.  It is a \emph{regular point} if $\mathscr A(\alpha_0)$ is an isomorphism, and a \emph{critical point} otherwise.  A critical point $(\alpha_0,0)$ is \emph{isolated} if there exists $\varepsilon > 0$ such that $\mathscr A(\alpha)$ is an isomorphism for every
$\alpha \in (\alpha_0 - \varepsilon, \alpha_0 + \varepsilon) \setminus \{\alpha_0\}$.  The \emph{critical set} of \eqref{eq:bifurcation_problem} is
\begin{equation*}
  \Lambda := \{(\alpha,0) \in M :
      \dim \ker \mathscr A(\alpha) > 0\}.
\end{equation*}
A trivial solution $(\alpha_0,0) \in M$ is a \emph{branching point} if there exists a nontrivial continuum $K \subset \overline{\mathscr S}$ with
$K \cap M = \{(\alpha_0,0)\}$.  Any maximal connected subset $\mathscr C \subset \overline{\mathscr S}$ containing $(\alpha_0,0)$ is called a \emph{branch} bifurcating from~$(\alpha_0,0)$, and $\mathscr C$ is said to admit \emph{symmetries at least~$(H)$} if
$\mathscr C \cap \overline{\mathscr S^H} \neq \emptyset$.
\vs
\noindent\textbf{The Local Bifurcation Invariant.}
Let $(\alpha_0,0) \in \Lambda$ be an isolated critical point.  Choose $\varepsilon > 0$ such that $\mathscr A(\alpha)$ is an isomorphism for all $\alpha \in (\alpha_0 - \varepsilon, \alpha_0 + \varepsilon) \setminus \{\alpha_0\}$, and pick regular values
\[
  \alpha_0^\pm
  \in (\alpha_0 - \varepsilon, \alpha_0 + \varepsilon), \qquad
  \alpha_0^- < \alpha_0 < \alpha_0^+.
\]
Since $\mathscr A(\alpha_0^\pm)$ are nonsingular, there exists $\delta > 0$ such that the maps
$\mathscr F_\pm(u) := \mathscr F(\alpha_0^\pm, u)$ satisfy $\mathscr F_\pm^{-1}(0) \cap \partial B_\delta = \emptyset$. Moreover, one can demonstrate that $\mathscr F_\pm(u)$ are $B_\delta$-admissibly $G$-homotopic to $\mathscr A(\alpha_0^\pm)$. By homotopy invariance of the $G$-equivariant degree, one has
\[
  \gdeg(\mathscr F_\pm, B_\delta)
  = \gdeg(\mathscr A(\alpha_0^\pm), B(\mathscr H)),
\]
where $B(\mathscr H)$ is the open unit ball in~$\mathscr H$. The \emph{local bifurcation invariant} at $(\alpha_0,0)$ is the Burnside ring element
\begin{equation*}
  \omega_G(\alpha_0)
  := \gdeg(\mathscr A(\alpha_0^-), B(\mathscr H))
   - \gdeg(\mathscr A(\alpha_0^+), B(\mathscr H)).
\end{equation*}
It is standard (cf.~\cite{book-new, AED}) that
$\omega_G(\alpha_0)$ is independent of the choices of
$\alpha_0^\pm$ and~$\delta$ and that its non-triviality in $A(G)$ implies the emergence of a continuum of non-trivial solutions from the trivial branch at $(\alpha_0,0) \in \Lambda$:
\begin{theorem}[Krasnosel'skii-Type Local Bifurcation Result]\label{thm:abstract_local_bif}
Let $(\alpha_0,0)$ be an isolated critical point. If $\omega_{G}(\alpha_0)\neq 0$ in $A(G)$, then there exists a branch $\mathscr C\subset \overline{\mathscr S}$ of nontrivial solutions to \eqref{eq:bifurcation_problem} bifurcating from $(\alpha_0,0)$. Moreover, if an orbit type $(H)\in\Phi(G)$ satisfies 
    \[
\operatorname{coeff}^{H}\big(\omega_{G}(\alpha_0)\big)\neq 0,
\]
then $\mathscr C$ consists only of solutions with symmetries at least $(H)$, i.e.\ $\mathscr C\cap\overline{\mathscr S^H}\neq\emptyset$.
\end{theorem}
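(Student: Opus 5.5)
We prove Theorem~\ref{thm:abstract_local_bif} by contradiction, using the existence, homotopy, excision and additivity axioms of the $G$-equivariant degree from Appendix~\ref{sec:appendix}. The natural tool is the auxiliary function trick of Ize and Krasnosel'skii. Suppose $(\alpha_0,0)$ is not a branching point with the required property. Choose $\varepsilon$ and $\alpha_0^\pm$ as in the definition of $\omega_G(\alpha_0)$, and $\delta>0$ so small that $\mathscr F(\alpha,u)\neq 0$ for $|\alpha-\alpha_0|\le\varepsilon$ and $0<\|u\|=\delta$. This uses the isolatedness of the critical point together with the standard a~priori estimate $\|\mathscr F(\alpha,u)-\mathscr A(\alpha)u\|=o(\|u\|)$. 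On the cylinder $\Omega:=\{(\alpha,u): |\alpha-\alpha_0|<\varepsilon,\ \|u\|<\delta\}\subset\br\oplus\mathscr H$ we introduce the $G$-invariant Urysohn-type function $\theta(\alpha,u)=|\alpha-\alpha_0|-\varepsilon/2$ (suitably modified so that it is positive at $\|u\|=\delta$ when solutions are absent there). We then form the $G$-equivariant completely continuous field $\mathscr F_\theta(\alpha,u):=(\theta(\alpha,u),\mathscr F(\alpha,u))$ on $\br\oplus\mathscr H$, where $G$ acts trivially on $\br$.

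First key step. If no continuum of nontrivial solutions emanates from $(\alpha_0,0)$ reaching $\partial\Omega$, a Whyburn-type separation lemma (compactness of $\overline{\mathscr S}\cap\overline\Omega$, by complete continuity) gives a $G$-invariant open set $U\subset\Omega$ such that $\partial U$ contains no solutions and $U\cap M$ is a neighborhood of $(\alpha_0,0)$ in $M$. Replacing $\theta$ by a function that is negative near $M\cap U$ and positive on $\partial U\cap\overline{\mathscr S}$ makes $\mathscr F_\theta$ $U$-admissible. Homotopy invariance, obtained by deforming $\theta$ to a positive constant, yields $\gdeg(\mathscr F_\theta,U)=0$. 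Second key step. Computing the same degree by excision near $M$ and the suspension property (multiplication by $\gdeg$ of the one-dimensional trivial factor) reduces it to $\gdeg(\mathscr A(\alpha_0^-),B(\mathscr H))-\gdeg(\mathscr A(\alpha_0^+),B(\mathscr H))=\omega_G(\alpha_0)$. This contradicts $\omega_G(\alpha_0)\neq 0$, so a branch $\mathscr C$ exists.

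For the symmetry statement, apply the same argument to the restriction to fixed-point spaces. By the recurrence formula and the \emph{coefficient} property of $\gdeg$, $\operatorname{coeff}^H(\omega_G(\alpha_0))\neq 0$ forces $\deg\bigl(\mathscr F_\theta^{H}, U^{H}\bigr)$, or its $W(H)$-equivariant analogue on $U_{(H)}$, to be nonzero. Here $\mathscr F_\theta^H$ is the restriction to $\br\oplus\mathscr H^H$. Running the separation argument inside $\mathscr H^H$ then produces solutions in $\overline{\mathscr S^H}$ accumulating at $(\alpha_0,0)$ and lying in $\mathscr C$. Equivalently, $\operatorname{coeff}^H\neq0$ implies, by the existence property, zeros of $\mathscr F_\theta$ with isotropy $\ge H$ on every admissible $U$, and we take the intersection over shrinking neighborhoods.

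I expect the main obstacle to be the careful construction of the separating set $U$ and the admissibility of $\mathscr F_\theta$ in infinite dimensions. In particular, verifying that the computation of $\gdeg(\mathscr F_\theta,U)$ collapses exactly to the difference of the linear degrees requires the suspension or product formula in $A(G)$. Since this material is standard (cf.~\cite{book-new, AED}), I would follow their proof and cite it for the Whyburn lemma.
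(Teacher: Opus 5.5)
\textbf{Verdict: the architecture is right, but two steps fail as written.}

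The paper gives no proof of this statement. It quotes it as standard from \cite{book-new, AED}, where the argument is the Ize complementing-function plus Whyburn scheme you outline. So your architecture, and your decision to defer to those references, matches the paper. As written, however, two steps of your sketch would fail.

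\textbf{First gap: the choice of $\delta$.} You cannot choose $\delta$ with $\mathscr F(\alpha,u)\neq0$ for all $|\alpha-\alpha_0|\le\varepsilon$ and $\|u\|=\delta$. The estimate $\|\mathscr F(\alpha,u)-\mathscr A(\alpha)u\|=o(\|u\|)$ excludes small nontrivial zeros only on compact sets of \emph{regular} parameters, such as $\alpha_0^\pm$. Near $\alpha_0$ the kernel of $\mathscr A(\alpha_0)$ defeats it. In fact the claim is false exactly when the theorem has content: a compact branch $K$ with $K\cap M=\{(\alpha_0,0)\}$ meets every small sphere $\|u\|=\delta$ at parameters near $\alpha_0$. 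Worse, with such a $\delta$ the boundary $\partial\Omega$ would carry no nontrivial zeros. Your contradiction hypothesis would then hold automatically, and your argument would yield $\omega_G(\alpha_0)=0$ at every isolated critical point. So the contradiction you reach comes from this false choice, not from negating the conclusion. Only the endpoint condition $\mathscr F(\alpha_0^\pm,u)\neq0$ for $0<\|u\|\le\delta$ (or its analogue on compact sets of regular parameters) is available. The lateral boundary is exactly where the continuum must exit.

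\textbf{Second gap: the homotopy in the Whyburn step.} Here $U$ is bounded and $U\cap M$ is an open neighbourhood of $(\alpha_0,0)$ in $M$. Hence $\partial U$ necessarily contains trivial zeros $(\alpha,0)$ of $\mathscr F$; only nontrivial zeros are excluded. The auxiliary function must stay nonzero at these points along the whole homotopy. If $\theta$ is negative near $M\cap U$, hence at these points, deforming it to a positive constant is not admissible. There are two standard repairs.
\begin{itemize}
\item Take $\theta<0$ on the separated compact piece containing $(\alpha_0,0)$ (hence on all nontrivial zeros in $U$) and $\theta>0$ on $M\cap\partial U$. Then the zeros of $\mathscr F_\theta$ in $U$ are trivial points at regular parameters where $\theta(\cdot,0)$ changes sign. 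Each contributes $\pm\gdeg(\mathscr A(\alpha),B(\mathscr H))$ through the product with $\pm\id$ on the trivial $\br$-factor, and these telescope to $\pm\omega_G(\alpha_0)$. The convex homotopy to a positive constant is now admissible and gives $0$.
\item Use Ize's family $\theta_s=\|u\|-s$. It stays negative on $M\cap\partial U$ and has degree $0$ once $s>\sup_U\|u\|$. For small $s$ it excises to a small cylinder around $(\alpha_0,0)$, where its degree is $\omega_G(\alpha_0)$.
\end{itemize}

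\textbf{Minor point: the symmetry claim.} $\operatorname{coeff}^H(\omega_G(\alpha_0))\neq0$ does not force the Brouwer degree on $\br\oplus\mathscr H^H$ to be nonzero. For example, let $G=\bz_2$ act on $\br\oplus\br_-$ (trivial plus sign representation) with $\mathscr A(\alpha_0^\mp)=\mp\id$. Then $\omega_G(\alpha_0)=-2(\bz_2)+(\bz_1)$, yet the Brouwer-degree difference on the $\bz_1$-fixed space vanishes. Your fallback via the existence property is the correct route. It gives zeros with $(G_u)\ge(H)$ on $\|u\|=s$ for all small $s$, hence $(\alpha_0,0)\in\overline{\mathscr S^H}\cap\mathscr C$.
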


\subsubsection{Spectral Flow Evaluation of the Difference of two $G$-equivariant Degrees}
\label{sec:spectral_flow_eval}
The local bifurcation invariant $\omega_G(\alpha_0)$ is a difference of two $G$-equivariant degrees.  We now show that, for maximal orbit types, the coefficient of this difference can be read off directly from a finite-dimensional $G$-representation encoding the net spectral
change.
\vs
Let $a, b \in \br$ be any two regular parameter values of the linearization $\mathscr A(\alpha)$, and consider the difference of $G$-equivariant degrees 
\begin{align*}
    \gdeg(\mathscr A(a), B(\mathscr H)) - \gdeg(\mathscr A(b), B(\mathscr H)) \in A(G).
\end{align*}
At each regular parameter value $\alpha \in \mathbb{R}$, the space $\mathscr{H}$ admits a $G$-invariant splitting $\mathscr{H} = E^+(\alpha) \oplus E^-(\alpha)$, {\red where the negative eigenspace $E^-(\alpha)$ is defined as the direct sum of the generalized eigenspaces corresponding strictly to the real negative eigenvalues of $\mathscr{A}(\alpha)$ (complex eigenvalues of a real linear operator occur in conjugate pairs, so they do not impact the parity of the Brouwer degree and are naturally absorbed into $E^+(\alpha)$)}. Since the negative eigenspace $E^-(\alpha)$ is a finite-dimensional $G$-representation, it admits a $G$-isotypic decomposition $E^-(\alpha) \simeq \bigoplus_{i \in \mathcal I} c_{i}(\alpha) \mathcal V_{i}$, where $c_{i}(\alpha)$ counts the number of negative eigenvalues in the $i$-th $G$-isotypic component. We encode the net topological change across the interval $[a,b]$ into the \emph{spectral flow $G$-representation} 
\begin{align}\label{def:spectral_flow_representation}
 \mathbb V_{[a,b]} := \bigoplus_{i \in \mathcal I} \rho_i \mathcal V_i, \quad \text{where} \quad \rho_i \equiv c_i(a) - c_i(b) \pmod 2, \quad \rho_i \in \{0,1\}.   
\end{align}
\begin{proposition}[Spectral Flow Evaluation]\label{prop:spectral_flow_evaluation}
If $(G, \mathbb V_{[a,b]})$ is an \textbf{Ize pair} with respect to a maximal orbit type $(H) \in \Phi_0(G; \mathbb V_{[a,b]})$, then one has
\[
  \operatorname{coeff}^H \left(
    \gdeg(\mathscr A(a), B(\mathscr H))
    - \gdeg(\mathscr A(b), B(\mathscr H))
  \right) \neq 0.
\]
\end{proposition}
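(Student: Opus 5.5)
We reduce the statement to a parity computation on $H$-fixed-point spaces. The first step is the standard product formula for the $G$-equivariant Leray--Schauder degree of a linear isomorphism (Appendix~\ref{sec:appendix}):
\[
\gdeg(\mathscr A(\alpha),B(\mathscr H))=\prod_{i\in\mathcal I}\big(\deg_{\mathcal V_i}\big)^{c_i(\alpha)},
\qquad \deg_{\mathcal V_i}:=\gdeg(-\id,B(\mathcal V_i)).
\]
Since each basic degree is an involution in $A(G)$, only the parities of the $c_i(\alpha)$ matter. The degree at $\alpha$ is therefore the $G$-degree of $-\id$ on a representation $W_\alpha$ whose isotypic multiplicities are $c_i(\alpha)\bmod 2$.

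The key tool for maximal orbit types is Lemma~\ref{lemm:nontriviality_conditions}. For a maximal orbit type $(H)$, the recurrence formula for the coefficient collapses, and $\operatorname{coeff}^H$ of $\gdeg(-\id,B(W))$ is determined by $(-1)^{\dim W^H}$ relative to $(-1)^{\dim W^G}$. Concretely, it equals
\[
\frac{(-1)^{\dim W^H}-(-1)^{\dim W^G}}{|W(H)|}
\]
up to the normalization by the Weyl group. It vanishes exactly when $\dim W^H-\dim W^G$ is even. Similarly, the coefficient of $(G)$ is $(-1)^{\dim W^G}$.

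We first have to check that $(H)$, maximal in $\Phi_0(G;\mathbb V_{[a,b]})$, can be treated as a maximal orbit type for the relevant ambient representations. The terms contributed by irreducibles with $\rho_i=0$ cancel pairwise between the two sides. So we factor
\[
\gdeg(\mathscr A(a))=\Theta\cdot\prod_{\rho_i=1,\,c_i(a)\text{ odd}}\deg_{\mathcal V_i},
\qquad
\gdeg(\mathscr A(b))=\Theta\cdot\prod_{\rho_i=1,\,c_i(b)\text{ odd}}\deg_{\mathcal V_i},
\]
with a common factor $\Theta$. Writing $\deg_{\mathbb V_{[a,b]}}=\prod_{\rho_i=1}\deg_{\mathcal V_i}$, the involution property gives
\[
\gdeg(\mathscr A(a))-\gdeg(\mathscr A(b))=X\,(1-\deg_{\mathbb V_{[a,b]}})\cdot(\pm 1)\ \text{up to a unit},
\]
where $X$ is a product of basic degrees and hence a unit in $A(G)$.

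We then compute $\operatorname{coeff}^H$ by passing to the ring homomorphism $\phi_H\colon A(G)\to\bz$, $\phi_H(\deg_W)=(-1)^{\dim W^H}$. Then
\[
\phi_H(\omega)=\pm\big((-1)^{\dim W_a^H}-(-1)^{\dim W_b^H}\big),
\]
and this is nonzero iff $\dim\mathbb V_{[a,b]}^H$ is odd. The same computation for $\phi_G$ gives $\phi_G(\omega)\neq0$ iff $\dim\mathbb V_{[a,b]}^G$ is odd.

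The Ize pair hypothesis, through Theorem~\ref{thm:parity_guarantee}, says $\dim\mathbb V^H_{[a,b]}-\dim\mathbb V^G_{[a,b]}$ is odd, so exactly one of $\phi_H(\omega),\phi_G(\omega)$ vanishes. The recurrence gives
\[
\phi_H(\omega)=\operatorname{coeff}^G(\omega)+\operatorname{coeff}^H(\omega)\,|W(H)|+\sum_{(K)>(H),\,K\neq G}(\cdots),
\]
and the last sum is empty by maximality. Since $\operatorname{coeff}^G(\omega)=\phi_G(\omega)$, if $\operatorname{coeff}^H(\omega)$ were $0$ we would get $\phi_H(\omega)=\phi_G(\omega)$. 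Both values lie in $\{0,\pm2\}$ and exactly one is zero, which is a contradiction. Hence $\operatorname{coeff}^H(\omega)\neq0$.

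The main obstacle is justifying that maximality of $(H)$ in $\Phi_0(G;\mathbb V_{[a,b]})$ suffices. The concern is that the unit factors $X$, $\Theta$ may involve orbit types $(K)$ with $H<K<G$ coming from other isotypic components. We avoid this by working entirely with the homomorphisms $\phi_H,\phi_G$, which never require knowledge of intermediate coefficients of $X$. The remaining worry is the recurrence step itself, i.e.\ that no $(K)$ with $H<K<G$ appears in $\omega$. We would argue this from the fact that $\phi_K(\omega)=0$ for every such $K$: because $\mathbb V_{[a,b]}^K=\mathbb V_{[a,b]}^G$ by maximality, $\phi_K(\omega)$ and $\phi_G(\omega)$ are built from the same parities. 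Then we would apply the triangular inversion from $\{\phi_K\}$ to coefficients.
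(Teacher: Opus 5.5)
Your reduction is sound up to the last step. By the recurrence formula, $\phi_L(\omega)=(-1)^{\mu_L(a)}-(-1)^{\mu_L(b)}\in\{0,\pm2\}$ for every $L$, so the unit factorization is not needed. The Ize hypothesis does make exactly one of $\phi_H(\omega),\phi_G(\omega)$ vanish.

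The gap is your claim that the sum $\sum_{(H)<(K)<(G)}\operatorname{coeff}^K(\omega)\,n(H,K)\,|W(K)|$ is empty, and your proposed justification that $\phi_K(\omega)=0$ for every intermediate $K$. Maximality of $(H)$ in $\Phi_0(G;\mathbb V_{[a,b]})$ only gives $\mathbb V^K_{[a,b]}=\mathbb V^G_{[a,b]}$. Hence $\phi_K(\omega)$ vanishes if and only if $\phi_G(\omega)$ does. When $\rho_0=1$, both are $\pm2$, and their signs can differ. The reason is that spectator components ($\rho_j=0$, $c_j(a)$ odd) with $\dim\mathcal V_j^K\not\equiv\dim\mathcal V_j^G$ enter $\mu_K$ but not $\mu_G$. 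This is exactly the unit factor $X$ you hoped to avoid, re-entering through the recurrence.

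Here is a concrete counterexample to the empty-sum claim. Take $G=D_4=\langle\gamma,\kappa\rangle$, $H=\langle\kappa\rangle$ and $K=\{e,\gamma^2,\kappa,\kappa\gamma^2\}$. Let $\mathcal V_s$ be the natural planar representation and $\chi$ the one-dimensional representation with kernel $K$. Set $E^-(a)=\mathcal V_0\oplus\chi$ and $E^-(b)=\mathcal V_s\oplus\chi$. Then $\mathbb V_{[a,b]}=\mathcal V_0\oplus\mathcal V_s$, and $(H)$ is maximal there with $\dim\mathbb V^H_{[a,b]}-\dim\mathbb V^G_{[a,b]}=1$. However, $\phi_G(\omega)=-2$, $\phi_K(\omega)=2$ and $\phi_H(\omega)=0$. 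So $\operatorname{coeff}^K(\omega)=(2-(-2))/|W(K)|=2\neq0$. Moreover $\operatorname{coeff}^H(\omega)=-1$, not the value $+1$ that your identity $\phi_H(\omega)=\operatorname{coeff}^G(\omega)+\operatorname{coeff}^H(\omega)|W(H)|$ would give.

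The missing idea is to run the triangular inversion modulo $4$ rather than exactly, which is what the paper does. All $\phi_L(\omega)$ lie in $\{0,\pm2\}$, and $\phi_K(\omega)\equiv\phi_G(\omega)\pmod 4$ for every $(K)>(H)$. Downward induction, using $n(K,G)=1$ and $|W(G)|=1$, then gives $\operatorname{coeff}^K(\omega)|W(K)|\equiv0\pmod4$ for all intermediate $K$. It follows that $\operatorname{coeff}^H(\omega)|W(H)|\equiv\phi_H(\omega)-\phi_G(\omega)\equiv2\pmod4$.

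If you replace ``the intermediate sum is empty'' by ``the intermediate sum is divisible by $4$'', your closing contradiction goes through verbatim: two elements of $\{0,\pm2\}$, exactly one of them zero, are incongruent mod $4$. As written, your argument is complete only when $\rho_0=0$. In that case every $\phi_K(\omega)$ with $(K)>(H)$, including $\phi_G(\omega)$, vanishes, and exact inversion kills all coefficients above $(H)$.
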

\begin{proof}
Put $\omega([a,b]) := \gdeg(\mathscr A(a), B(\mathscr H)) - \gdeg(\mathscr A(b), B(\mathscr H)) \in A(G)$ and, for each closed subgroup $K \leq G$, denote its local Brouwer degree difference by $\omega^K([a,b]) := \deg(\mathscr A(a)^K, B(\mathscr H^K)) - \deg(\mathscr A(b)^K, B(\mathscr H^K)) \in \bz$. 
By linearity of the coefficient operator, the Recurrence Formula (Appendix~\ref{sec:appendix}) gives
\begin{align*}
\operatorname{coeff}^H ( \omega([a,b]) )  \quad = \frac{\omega^H([a,b]) - \sum_{L > H} \operatorname{coeff}^L ( \omega([a,b]) ) |W(L)| n(H,L)}{|W(H)|}.
\end{align*}
For any subgroup $L \leq G$, let $\mu_L(\alpha) := \sum_{i \in \mathcal I} c_{i}(\alpha) \dim \mathcal V_{i}^L$ denote the dimension of the $L$-fixed-point space of the negative eigenspace at a regular parameter $\alpha$.
Notice that the local degree differences $\omega^K([a,b]) = (-1)^{\mu_K(a)} - (-1)^{\mu_K(b)}$ strictly take values in $\{-2, 0, 2\}$. As such, the recurrence relation operates entirely over even integers. Evaluating the sequence modulo 4 allows us to distinguish between terms that share the same parity of fixed-point dimensions (which vanish) and those with opposite parities (which evaluate to $2$), i.e.
\[
    \omega^K([a,b]) \equiv \begin{cases} 
        0 \pmod 4 & \text{if } \mu_K(a) - \mu_K(b) \equiv 0 \pmod 2, \\
        2 \pmod 4 & \text{if } \mu_K(a) - \mu_K(b) \equiv 1 \pmod 2.
    \end{cases}
\]
Since $(H)$ is a maximal orbit type in $\mathbb V_{[a,b]}$, the $L$-fixed-point space satisfies $\mathbb V_{[a,b]}^L = \mathbb V_{[a,b]}^G$ for all $(L) > (H)$, implying $(\mu_L(a) - \mu_L(b)) \equiv (\mu_G(a) - \mu_G(b)) \pmod{2}$. Under our modulo 4 encoding, this maximality guarantees
\begin{equation}\label{eq:parity_lock_1}
    \omega^L([a,b]) \equiv \omega^G([a,b]) \pmod 4 \quad \text{for all } (L) > (H).
\end{equation}
Conversely, since $(G, \mathbb V_{[a,b]})$ is an Ize pair, the Parity Criterion (Theorem~\ref{thm:parity_guarantee}) implies $\dim \mathbb V_{[a,b]}^H \not\equiv \dim \mathbb V_{[a,b]}^G \pmod 2$. Consequently, $\mu_H(a) - \mu_H(b)$ and $\mu_G(a) - \mu_G(b)$ must have strictly opposite parities, i.e.
\begin{equation}\label{eq:parity_lock_2}
    \omega^H([a,b]) \not\equiv \omega^G([a,b]) \pmod 4.
\end{equation}
We now evaluate the integers $\operatorname{coeff}^L ( \omega([a,b]) ) |W(L)| \pmod 4$ for all $(L) > (H)$ via downward induction. For the top element $G$, the recurrence sum is empty, giving $\operatorname{coeff}^G ( \omega([a,b]) ) |W(G)| = \omega^G([a,b])$. For any intermediate orbit type $H < L < G$, assume inductively that one has $\operatorname{coeff}^{\wt K} ( \omega([a,b]) ) |W(\tilde K)| \equiv 0 \pmod 4$ for all $L < \tilde K < G$. Evaluating the recurrence relation at $L$ modulo 4 yields
\begin{align*}
 \operatorname{coeff}^L ( \omega([a,b]) )|W(L)|  \equiv \omega^L([a,b]) - \sum_{L < \tilde K} \operatorname{coeff}^{\wt K} ( \omega([a,b]) ) |W(\tilde K)| n(L, \tilde K) \pmod 4.    
\end{align*}
Since $n(L,G) = 1$ for any subgroup $L$, and the intermediate terms vanish modulo 4 by the inductive hypothesis, this collapses to $\operatorname{coeff}^L ( \omega([a,b]) ) |W(L)| \equiv \omega^L([a,b]) - \omega^G([a,b]) \pmod 4$. By \eqref{eq:parity_lock_1}, this difference is exactly $0 \pmod 4$. Thus, $\operatorname{coeff}^L ( \omega([a,b]) ) |W(L)| \equiv 0 \pmod 4$ for all intermediate subgroups $H < L < G$.
Finally, we evaluate the relation at $H$:
\begin{align*}
\operatorname{coeff}^H ( \omega([a,b]) ) |W(H)| \equiv \omega^H([a,b]) - \sum_{H < L} \operatorname{coeff}^L ( \omega([a,b]) ) |W(L)| n(H,L) \pmod 4.    
\end{align*}
Since all intermediate terms $\operatorname{coeff}^L ( \omega([a,b]) ) |W(L)|$ vanish modulo 4, and $n(H,G) = 1$, we obtain $\operatorname{coeff}^H ( \omega([a,b]) ) |W(H)| \equiv \omega^H([a,b]) - \omega^G([a,b]) \pmod 4$. Since both values strictly belong to $\{0, 2\} \pmod 4$ and are unequal by \eqref{eq:parity_lock_2}, their difference must be exactly $2 \pmod 4$. Since $\operatorname{coeff}^H ( \omega([a,b]) ) |W(H)| \equiv 2 \pmod 4$, it follows that $\operatorname{coeff}^H ( \omega([a,b]) ) |W(H)| \neq 0$. In this way we have proven that the coefficient $\operatorname{coeff}^H(\omega([a,b]))$ must be non-zero.
\end{proof}
\subsection{Local Bifurcation for Ize Pairs}
\label{sec:local_bif_ize}
Proposition~\ref{prop:spectral_flow_evaluation} evaluates the degree difference between any two regular parameter values.  Specializing to the endpoints $\alpha_0^\pm \in \br$ flanking an isolated critical point $(\alpha_0,0) \in \Lambda$ yields our main local bifurcation result.
\begin{theorem}[Local Bifurcation for Ize Pairs]\label{thm:local_bif_ize}
Let $(\alpha_0,0) \in \Lambda$ be an isolated critical point of $\mathscr A(\alpha)$. If the spectral flow representation $(G, \mathbb V_{[\alpha_0^-,\alpha_0^+]})$ constitutes an \textbf{Ize pair} for a maximal orbit type $(H) \in \Phi_0(G; \mathbb V_{[\alpha_0^-,\alpha_0^+]})$, then $\operatorname{coeff}^H(\omega_G(\alpha_0)) \neq 0$ and there exists a branch $\mathscr C_H$ of \textbf{non-trivial} solutions to \eqref{eq:bifurcation_problem} bifurcating from $(\alpha_0,0)$ with symmetries at least $(H)$.
\end{theorem}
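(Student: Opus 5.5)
The plan is to derive Theorem~\ref{thm:local_bif_ize} directly from Proposition~\ref{prop:spectral_flow_evaluation} and Theorem~\ref{thm:abstract_local_bif}. Almost no new analysis is needed; the work is in checking that the hypotheses line up.

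First, fix $\varepsilon>0$ so that $\mathscr A(\alpha)$ is an isomorphism on $(\alpha_0-\varepsilon,\alpha_0+\varepsilon)\setminus\{\alpha_0\}$, and choose regular values $\alpha_0^-<\alpha_0<\alpha_0^+$ in this interval. Both $\alpha_0^\pm$ are regular parameter values of the linearization. The spectral flow representation $\mathbb V_{[\alpha_0^-,\alpha_0^+]}$ from \eqref{def:spectral_flow_representation} is then well defined from the isotypic counts $c_i(\alpha_0^\pm)$ of the negative eigenspaces $E^-(\alpha_0^\pm)$.

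I would also note that $\mathbb V_{[\alpha_0^-,\alpha_0^+]}$ does not depend on the choice of $\alpha_0^\pm$. On each side of $\alpha_0$, $\mathscr A(\alpha)$ stays invertible, so no real eigenvalue crosses zero within the isotypic component $\mathscr H_i$. Hence $c_i(\alpha)$ is constant mod $2$ on each side; real eigenvalues can only leave the real axis in conjugate pairs. Strictly this is not needed for the theorem, since the hypothesis is stated for the chosen $\alpha_0^\pm$. It does match the independence of $\omega_G(\alpha_0)$ from these choices.

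Second, apply Proposition~\ref{prop:spectral_flow_evaluation} with $a=\alpha_0^-$ and $b=\alpha_0^+$. By hypothesis, $(G,\mathbb V_{[\alpha_0^-,\alpha_0^+]})$ is an Ize pair with respect to the maximal orbit type $(H)$. The proposition therefore gives
\[
\operatorname{coeff}^H\big(\gdeg(\mathscr A(\alpha_0^-),B(\mathscr H))-\gdeg(\mathscr A(\alpha_0^+),B(\mathscr H))\big)\neq 0 .
\]
By the definition of the local bifurcation invariant, this difference is exactly $\omega_G(\alpha_0)$, so $\operatorname{coeff}^H(\omega_G(\alpha_0))\neq0$. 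In particular $\omega_G(\alpha_0)\neq 0$ in $A(G)$.

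Third, invoke Theorem~\ref{thm:abstract_local_bif}. Since $\omega_G(\alpha_0)\neq0$, there is a branch $\mathscr C_H\subset\overline{\mathscr S}$ bifurcating from $(\alpha_0,0)$. Since the $(H)$-coefficient is nonzero, $\mathscr C_H\cap\overline{\mathscr S^H}\neq\emptyset$, i.e.\ the branch has symmetries at least $(H)$.

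The only point that needs care is the bookkeeping of orbit types. The hypothesis makes $(H)$ maximal in $\Phi_0(G;\mathbb V_{[\alpha_0^-,\alpha_0^+]})$, while Theorem~\ref{thm:abstract_local_bif} refers to orbit types $(H)\in\Phi(G)$ and coefficients in $A(G)$. I would remark that $H$ is a closed subgroup of $G$, and that the coefficient in Proposition~\ref{prop:spectral_flow_evaluation} is already taken in $A(G)$. So no further compatibility argument is required, and the whole argument reduces to the two earlier results.
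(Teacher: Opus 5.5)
Your proposal is correct and matches the paper's argument: take $a=\alpha_0^-$, $b=\alpha_0^+$ in Proposition~\ref{prop:spectral_flow_evaluation} to get $\operatorname{coeff}^H(\omega_G(\alpha_0))\neq 0$, then apply Theorem~\ref{thm:abstract_local_bif}. One caveat on your aside, which you rightly say is not needed: $c_i(\alpha)$ is constant mod $2$ on each side of $\alpha_0$ only when $\mathcal V_i$ is absolutely irreducible, because for complex or quaternionic type a negative real eigenvalue carried by a single copy of $\mathcal V_i$ can leave the real axis and change $c_i$ by one without crossing zero; since such components have even-dimensional $\mathcal V_i^H$, nothing in the proof is affected.
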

\begin{proof}
By definition, the local bifurcation invariant $\omega_G(\alpha_0)$ is the difference in the degrees at the regular endpoints $\alpha_0^-$ and $\alpha_0^+$. Setting $a = \alpha_0^-$ and $b = \alpha_0^+$, the spectral flow representation $\mathbb V_{[a,b]}$ from Proposition~\ref{prop:spectral_flow_evaluation} is exactly $\mathbb V_{[\alpha_0^-,\alpha_0^+]}$. Since $(G, \mathbb V_{[\alpha_0^-,\alpha_0^+]})$ is an Ize pair for maximal $(H)$, Proposition~\ref{prop:spectral_flow_evaluation} guarantees that $\operatorname{coeff}^H(\omega_G(\alpha_0)) \neq 0$. By Krasnosel'skii's Theorem (Theorem~\ref{thm:abstract_local_bif}), a branch $\mathscr C_H$ of non-trivial solutions with symmetries at least $(H)$ must bifurcate from the trivial branch at $(\alpha_0, 0)$.
\end{proof}
\begin{remark}\rm
The Ize pair hypothesis is imposed on the spectral flow representation $\mathbb V_{[\alpha_0^-,\alpha_0^+]}$, not on the ambient
space~$\mathscr H$.  By the Parity Criterion
(Theorem~\ref{thm:parity_guarantee}), verification reduces to checking whether some maximal isotropy subgroup $H$ satisfies $\dim\mathbb V_{[\alpha_0^-,\alpha_0^+]}^H - \dim\mathbb V_{[\alpha_0^-,\alpha_0^+]}^G \equiv 1 \pmod{2}$---a
single dimension count on the crossing data $c_i(\alpha_0^\pm)$, with no dependence on the multiplicative structure of the Burnside ring.
\end{remark}

\subsection{Global Bifurcation for Ize Pairs}\label{sec:global_bif_ize}
Theorem \ref{thm:local_bif_ize} guarantees the emergence of an $H$-symmetric branch at an isolated critical point, but provides no information about the branch's fate away from $(\alpha_0, 0)$: it could, a priori, bend back and reconnect to the trivial branch at another critical point, forming a closed loop with no large-amplitude solutions. To rule out this possibility and establish unboundedness, we invoke the Rabinowitz alternative and show that the Ize pair condition, when imposed on the global spectral flow across the entire critical set, forces the branch to escape any bounded region.
\vs
The following result is standard (cf.~\cite{AED, book-new}):
\begin{theorem}[The Rabinowitz Alternative]\label{thm:Rab}
Assume the critical set $\Lambda$ is finite:
\begin{enumerate}[label=($B$)]
\item\label{bB} $\Lambda = \{(\alpha_0,0),(\alpha_1,0),\ldots,(\alpha_n,0)\}$ with $\alpha_0 < \alpha_1 < \cdots < \alpha_n$.
\end{enumerate}
Let $\mathcal U \subset \br \times \mathscr H$ be an open bounded $G$-invariant set with $\partial \mathcal U \cap \Lambda = \emptyset$. If $\mathscr C$ is a branch of nontrivial solutions to $\mathscr  F(\alpha,u) = 0$ satisfying $\omega_{G}(\alpha_k) \neq 0$
for some $(\alpha_k,0) \in \mathcal U \cap \Lambda$, then either
\begin{enumerate}
\item[$(a)$] $\mathscr C \cap \partial \mathcal U \neq \emptyset$ (the branch reaches the boundary); or
\item[$(b)$] there exists a finite set $\mathscr C \cap \Lambda = \{(\alpha_{k_0},0),\ldots,(\alpha_{k_l},0)\}$ satisfying
$\sum_{i=0}^l \omega_{G}(\alpha_{k_i}) = 0$.
\end{enumerate}
\end{theorem}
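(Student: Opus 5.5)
The plan is the classical Rabinowitz argument, run with the $G$-equivariant Leray--Schauder degree in place of the Brouwer degree. I argue by contradiction. Suppose $\mathscr C \cap \partial\mathcal U = \emptyset$. Then $\mathscr C \cap \overline{\mathcal U}$ is a compact component of $\overline{\mathscr S}\cup M$ restricted to $\overline{\mathcal U}$; compactness follows from complete continuity of $\mathscr F$ together with boundedness of $\mathcal U$. A standard Whyburn-type separation lemma then gives an open bounded $G$-invariant set $\Omega \subset \mathcal U$ with three properties: $\mathscr C \cap \overline{\mathcal U} \subset \Omega$; $\partial\Omega \cap \overline{\mathscr S} = \emptyset$; and $\Omega \cap \Lambda = \mathscr C \cap \Lambda = \{(\alpha_{k_0},0),\ldots,(\alpha_{k_l},0)\}$. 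The set $\Omega$ is $G$-invariant because $\mathscr C$ and $\mathscr S$ are; if necessary one averages over $G$ or takes the union of translates. Since $\Lambda$ is finite by \ref{bB}, I can also shrink $\Omega$ near $M$. After shrinking, $\Omega\cap M$ is a finite union of intervals $(\beta_j^-,\beta_j^+)\times\{0\}$, each containing exactly one $\alpha_{k_j}$, with regular endpoints $\beta_j^\pm$.

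Next I introduce the auxiliary $G$-invariant function $\theta(\alpha,u) := \|u\|^2 - r^2$, where $r>0$ is small. I then consider the $\br\oplus\mathscr H$-valued completely continuous field
\[
\mathscr F_\theta(\alpha,u) := (\theta(\alpha,u), \mathscr F(\alpha,u)),
\]
with $G$ acting trivially on the first factor. On $\partial\Omega$ it has no zeros: there $\mathscr F\neq 0$, except on the part of $\partial\Omega$ near $M$, where $\|u\|<r$ and so $\theta<0$. Hence $\gdeg(\mathscr F_\theta,\Omega)$ is well defined. Now deform $\theta$ through $\theta_t := \|u\|^2 - (r^2 + tR^2)$, with $R$ larger than the radius of $\Omega$. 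This is an admissible $G$-homotopy on $\Omega$: zeros with $\theta_t=0$ and $\mathscr F=0$ have $\|u\|>0$, so they are nontrivial solutions, and these never meet $\partial\Omega$. At $t=1$ the field $\theta_1$ is negative on all of $\overline\Omega$, so the degree vanishes. Therefore
\[
\gdeg(\mathscr F_\theta,\Omega)=0 .
\]

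The main obstacle is the third step: computing this same degree by localization near the trivial branch and identifying it with $\sum_j \omega_G(\alpha_{k_j})$. By excision, the degree equals the sum over $j$ of the degrees on small cylinders $(\beta_j^-,\beta_j^+)\times B_\delta$. On each cylinder I homotope $\mathscr F(\alpha,u)$ to its linearization $\mathscr A(\alpha)u$; admissibility holds for small $\delta$ because the endpoints $\beta_j^\pm$ are regular. I then replace $\theta$ by a function of the form $(\alpha-\alpha_{k_j})$-type sign data together with $\|u\|-\delta/2$. The suspension/product property of the equivariant degree then gives, up to a fixed sign convention,
\[
\gdeg\bigl(\mathscr A(\beta_j^-),B(\mathscr H)\bigr)-\gdeg\bigl(\mathscr A(\beta_j^+),B(\mathscr H)\bigr)=\omega_G(\alpha_{k_j}).
\]
Here the independence of $\omega_G$ from the choice of flanking regular values, noted before Theorem~\ref{thm:abstract_local_bif}, is used. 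This is exactly the computation behind Theorem~\ref{thm:abstract_local_bif}, and I would invoke it (via the parametrized degree axioms in Appendix~\ref{sec:appendix}) rather than redo it. Summing over $j$ and comparing with the vanishing obtained in the second step yields $\sum_{i=0}^l \omega_G(\alpha_{k_i})=0$, which is alternative $(b)$. Finally, the hypothesis $\omega_G(\alpha_k)\neq0$ is what ensures, through Theorem~\ref{thm:abstract_local_bif}, that $\mathscr C$ is a genuine nontrivial branch, so that the dichotomy is meaningful.
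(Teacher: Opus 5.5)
The paper gives no proof of this theorem; it quotes it as standard from \cite{AED, book-new}. Your argument is exactly the standard proof found there, and it is correct: Whyburn separation produces a $G$-invariant isolating set $\Omega$ around $\mathscr C$, the complementing function $\theta=\|u\|^2-r^2$ has its degree killed by the admissible homotopy $\theta_t$, and excision reduces to isolating cylinders on which each local degree equals $\gdeg(\mathscr A(\beta_j^-),B(\mathscr H))-\gdeg(\mathscr A(\beta_j^+),B(\mathscr H))=\omega_G(\alpha_{k_j})$. Two small repairs are needed. First, $\mathscr C$ must be treated as a component of the compact set $\overline{\mathscr S}\cap\overline{\mathcal U}$ (connectedness gives $\mathscr C\subset\mathcal U$), not of $(\overline{\mathscr S}\cup M)\cap\overline{\mathcal U}$, because the component of the latter through $(\alpha_k,0)$ contains a trivial segment that reaches $\partial\mathcal U$. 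Second, $r$ must be fixed after the cylinders, small enough that every nontrivial solution in $\Omega$ with $\|u\|=r$ lies inside them; this is what the excision step actually requires, and it makes the reshaping of $\Omega\cap M$ into finitely many intervals unnecessary.
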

To exclude alternative $(b)$ and thereby guarantee unboundedness, we note that the continuous dependence of $\sigma(\mathscr  A(\alpha))$ on $\alpha$ implies a telescoping of the sum of local bifurcation invariants. Specifically, by applying the computational formula for the degree at each regular endpoint (cf. \cite{Ghanem_annales}, or the proof of Proposition 4.5 in \cite{Ghanem2024}), one obtains:
\begin{align}\label{eq:telescope}
\sum_{k=0}^n \omega_{G}(\alpha_k) = G\text{-deg}(\mathscr  A(\alpha_0^-), B(\mathscr H)) - G\text{-deg}(\mathscr  A(\alpha_n^+), B(\mathscr H)).
\end{align}
The right-hand side of \eqref{eq:telescope} compares the spectral data strictly at the two extreme regular values $\alpha_0^-$ and $\alpha_n^+$, flanking the entire critical set $\Lambda$. By evaluating this difference using Proposition~\ref{prop:spectral_flow_evaluation}, we arrive at our main global result:
\begin{theorem}[Global Bifurcation for Ize Pairs]\label{thm:global_bif_ize}
Assume the critical set $\Lambda$ is finite as in \ref{bB}, and bounded within the interval $(\alpha_0^-, \alpha_n^+)$ where $\alpha_0^-$ and $\alpha_n^+$ are regular parameter values. If $(G, \mathbb V_{[\alpha_0^-,\alpha_n^+]})$ is an \textbf{Ize pair} with respect to a maximal orbit type $(H) \in \Phi_0(G; \mathbb V_{[\alpha_0^-,\alpha_n^+]})$, then there exists an \textbf{unbounded} branch $\mathscr C_H$ of solutions to \eqref{eq:bifurcation_problem} with symmetries at least $(H)$.
\end{theorem}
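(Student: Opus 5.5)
The plan is to combine Proposition~\ref{prop:spectral_flow_evaluation}, the telescoping identity \eqref{eq:telescope}, and the Rabinowitz alternative (Theorem~\ref{thm:Rab}), and to argue by contradiction that every admissible bounded neighborhood of the relevant branch is exited.

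First, set $a=\alpha_0^-$ and $b=\alpha_n^+$. Apply Proposition~\ref{prop:spectral_flow_evaluation} to the Ize pair $(G,\mathbb V_{[a,b]})$; it gives
\[
\operatorname{coeff}^H\Big(\sum_{k=0}^n \omega_G(\alpha_k)\Big)=\operatorname{coeff}^H\big(\gdeg(\mathscr A(a),B(\mathscr H))-\gdeg(\mathscr A(b),B(\mathscr H))\big)\neq 0 .
\]
By linearity of $\operatorname{coeff}^H$, there is at least one index $k$ with $\operatorname{coeff}^H(\omega_G(\alpha_k))\neq 0$. Theorem~\ref{thm:abstract_local_bif} then yields a branch $\mathscr C_H$ bifurcating from $(\alpha_k,0)$ with $\mathscr C_H\cap\overline{\mathscr S^H}\neq\emptyset$.

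Second, suppose $\mathscr C_H$ is bounded. The aim is to contradict Theorem~\ref{thm:Rab}. I would take the maximal connected set containing $(\alpha_k,0)$ inside the closed fixed-point space $\br\times\mathscr H^H$. Working in $\mathscr H^H$, or equivalently keeping track of the $H$-coefficient, makes the symmetries at least $(H)$ automatic. Since $\Lambda$ is finite, a standard Whyburn-type separation argument gives an open bounded $G$-invariant $\mathcal U$ containing this set, with $\partial\mathcal U\cap\overline{\mathscr S}=\emptyset$ and $\partial\mathcal U\cap\Lambda=\emptyset$. Alternative (a) then fails, so (b) holds: $\sum_{i}\omega_G(\alpha_{k_i})=0$ over the critical points met by the branch. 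Taking $H$-coefficients gives $\sum_i\operatorname{coeff}^H(\omega_G(\alpha_{k_i}))=0$.

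Third, contradict this identity. Here lies the main obstacle: the branch may meet only a proper subset of $\Lambda$, so \eqref{eq:telescope} does not apply directly. I would handle it by choosing the starting index cleverly. Every summand $\operatorname{coeff}^H(\omega_G(\alpha_k))|W(H)|$ is computed by the mod-4 recurrence from the proof of Proposition~\ref{prop:spectral_flow_evaluation}. Provided $H$ remains maximal for each local spectral flow representation, each such term is congruent to $0$ or $2 \pmod 4$, according to the parity of the local crossing count $\dim\mathbb V_{[\alpha_k^-,\alpha_k^+]}^H-\dim\mathbb V_{[\alpha_k^-,\alpha_k^+]}^G$. These local parities add mod $2$ to the global odd parity. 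The set of critical points with odd local parity therefore has odd cardinality.

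The delicate point is to show that some component meets an odd number of them. Components are disjoint, and together they partition the odd-parity critical points. Since the total number is odd, at least one component $\mathscr C$ contains an odd number of odd-parity points. For that component, $\sum\operatorname{coeff}^H(\omega_G)|W(H)|\equiv 2\pmod 4$, so the sum is nonzero, which rules out alternative (b). That component is therefore unbounded, has symmetries at least $(H)$, and serves as $\mathscr C_H$.

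The step I expect to fail most easily is the maximality requirement. The local representation $\mathbb V_{[\alpha_k^-,\alpha_k^+]}$ can have an isotropy lattice different from that of the global $\mathbb V_{[a,b]}$. If that happens, I would fall back on working directly with the Brouwer degree differences on $\mathscr H^L$ for $L\geq H$, which do telescope and localize additively.
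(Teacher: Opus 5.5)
\textbf{Gap: the mod-4 selection of the component.} Your outer structure is the paper's. You get a nonzero total $H$-coefficient from \eqref{eq:telescope} and Proposition~\ref{prop:spectral_flow_evaluation}, partition the critical points among components, and use Theorem~\ref{thm:Rab} to exclude (b) for one of them. The problem is the mechanism you use to pick that component, and it fails for exactly the reason you suspect. Write $\mathbb V_k:=\mathbb V_{[\alpha_k^-,\alpha_k^+]}$. The claim that $\operatorname{coeff}^H(\omega_G(\alpha_k))|W(H)|\equiv 0$ or $2 \pmod 4$ according to the parity of $\dim\mathbb V_k^H-\dim\mathbb V_k^G$ needs the recurrence to collapse at each $\alpha_k$, i.e.\ $\dim\mathbb V_k^L\equiv\dim\mathbb V_k^G \pmod 2$ for all $(L)>(H)$. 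Maximality of $(H)$ in the global $\mathbb V_{[a,b]}$ only controls the mod-$2$ sum of these quantities over $k$. Take an irreducible $\mathcal V_i$ with $\dim\mathcal V_i^L$ odd for some $(L)>(H)$. It may cross at $\alpha_1$ and cross back at $\alpha_2$, leaving $\mathbb V_{[a,b]}$ unchanged. Through the terms $\operatorname{coeff}^L(\omega_G(\alpha_k))|W(L)|\,n(H,L)$ it can still make $\operatorname{coeff}^H(\omega_G(\alpha_k))|W(H)|\equiv 2 \pmod 4$ at $\alpha_1,\alpha_2$, although the local parity there is even. Suppose $\alpha_1$ lies on the same component as the unique odd-parity point $\alpha_3$. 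Then that component can have $H$-sum $\equiv 0 \pmod 4$, possibly $0$, while the component carrying a nonzero $H$-sum is $\{\alpha_2\}$, which contains no odd-parity point at all. Your fallback is not an argument as stated. The Brouwer degree differences on $\mathscr H^K$, summed over one component's critical points, are additive but do not telescope, because those points need not be consecutive in $\Lambda$.

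\textbf{The fix.} The parity detour is unnecessary; the integer version of your partition idea is the paper's proof. Three facts suffice: $\operatorname{coeff}^H\colon A(G)\to\bz$ is additive; critical points that are not bifurcation points have $\omega_G(\alpha_k)=0$ by Theorem~\ref{thm:abstract_local_bif}; and distinct components of $\overline{\mathscr S}$ are disjoint. Hence the nonzero integer $\operatorname{coeff}^H\big(\sum_k\omega_G(\alpha_k)\big)$ splits as a sum over components. So some component $\mathscr C$ has $\sum_{(\alpha_k,0)\in\mathscr C}\operatorname{coeff}^H(\omega_G(\alpha_k))\neq 0$. This rules out (b) for $\mathscr C$, so your Whyburn/Rabinowitz step, applied to the full component, makes $\mathscr C$ unbounded. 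Moreover $\mathscr C$ contains some $\alpha_k$ with $\operatorname{coeff}^H(\omega_G(\alpha_k))\neq0$, whence $\mathscr C\cap\overline{\mathscr S^H}\neq\emptyset$ by Theorem~\ref{thm:abstract_local_bif}. The paper phrases the partition step with the full Burnside sums; doing it with $\operatorname{coeff}^H$ is what makes this last step legitimate.

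\textbf{The passage to $\br\times\mathscr H^H$.} Drop this from your second step. Theorem~\ref{thm:Rab} concerns components of the full $\overline{\mathscr S}$ and $G$-invariant $\mathcal U$. A bounded component of $\overline{\mathscr S}\cap(\br\times\mathscr H^H)$ does not yield such a $\mathcal U$ with $\partial\mathcal U\cap\overline{\mathscr S}=\emptyset$, since the full component through it may escape via solutions outside $\mathscr H^H$. Nor is this equivalent to tracking $\operatorname{coeff}^H$. The Leray--Schauder degree on $\mathscr H^H$ changes across $[a,b]$ only when $\dim\mathbb V_{[a,b]}^H$ is odd. For an Ize pair that fails whenever $\dim\mathbb V_{[a,b]}^G$ is odd.
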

\begin{proof}
To prove the existence of an unbounded branch, we must exclude alternative $(b)$ in the Rabinowitz Alternative (Theorem~\ref{thm:Rab}) for at least one branch. Taking advantage of the telescoping property \eqref{eq:telescope}, the total sum of the local invariants across all critical points simplifies to the difference of the degrees at the endpoints. Setting $a = \alpha_0^-$ and $b = \alpha_n^+$, the global spectral flow representation $\mathbb V_{[a,b]}$ from Proposition~\ref{prop:spectral_flow_evaluation} is exactly $\mathbb V_{[\alpha_0^-,\alpha_n^+]}$. Since $(G, \mathbb V_{[\alpha_0^-,\alpha_n^+]})$ is an Ize pair for maximal $(H)$, Proposition~\ref{prop:spectral_flow_evaluation} guarantees that the coefficient of $(H)$ in the total sum $\sum_{k=0}^n \omega_{G}(\alpha_k)$ is non-zero. 

If every branch bifurcating from $\Lambda$ were bounded, alternative $(b)$ would apply to each of them. This would effectively partition the critical set $\Lambda$ into disjoint subsets over which the local invariants sum to zero, forcing the total sum over all of $\Lambda$ to be zero. Since we have established that the total sum is strictly non-zero, at least one branch $\mathscr C_H$ must violate alternative $(b)$ and therefore be unbounded. By Theorem~\ref{thm:abstract_local_bif}, this branch admits symmetries at least $(H)$.
\end{proof}
\begin{remark}\label{rem:infinite_lambda}\rm
The formulation of Theorem~\ref{thm:global_bif_ize} highlights the macroscopic nature of this approach. Notice that Proposition~\ref{prop:spectral_flow_evaluation} evaluates the total topological jump across the interval $[a,b]$ strictly through the boundary data $E^-(a)$ and $E^-(b)$. Consequently, the global bifurcation guarantee does not require a granular, point-by-point analysis of the individual critical kernels within the interval. As long as the interval boundaries are regular and the net spectral flow representation $\mathbb V_{[a,b]}$ constitutes an Ize pair, the topological change is strictly non-zero. This forces a branch to escape the local parameter window regardless of the internal complexity of the eigenvalue crossings within the critical set.
\end{remark}
\section{Equivariant Bifurcation of Periodic Solutions}
\label{sec:bifurcation_application}
The abstract framework of Sections \ref{sec:maximality}--\ref{sec:bifurcation_setup} reduces equivariant bifurcation guarantees to a parity check on the spectral flow representation. We now demonstrate that this reduction yields concrete, computable results by applying it to a class of problems where traditional Burnside ring methods are particularly unwieldy: symmetrically coupled oscillator networks.
\vs
Systems of symmetrically coupled oscillators arise throughout the physical sciences---from coupled Josephson junctions and laser arrays to molecular vibrations and neural networks---and serve as a
paradigmatic testing ground for equivariant topological methods. The key feature of such systems is that the coupling symmetry group combined with the time-translation and time-reversal symmetries inherent in periodic, autonomous second-order equations, induces a rich product group whose isotypic structure can be exploited by the degree-theoretic machinery of Section~\ref{sec:bifurcation_setup}.  We
now demonstrate this by applying the Ize pair framework to a concrete class of coupled oscillator equations.
\vs
Let $\Gamma$ be a finite group acting on $V := \br^N$ by permutation of coordinates:
\[
  \gamma(x_1, \ldots, x_N)
  := (x_{\gamma(1)}, \ldots, x_{\gamma(N)}).
\]
This action models a network of $N$ identical oscillators whose coupling topology is invariant under the symmetry group~$\Gamma$---for instance, $\Gamma = D_N$ for a ring of oscillators, or
$\Gamma = S_N$ for all-to-all coupling.  We study the parametrized system of $p$-periodic, second-order autonomous equations
\begin{equation}\label{eq:system_bif}
  \begin{cases}
    {\red -}\ddot x(t) = f(x(t)) + A(\alpha)\,x(t),
      \quad t \in \br,\; x(t) \in V, \\
    x(t) = x(t + p),\quad
    \dot x(t) = \dot x(t + p),
  \end{cases}
\end{equation}
where $\alpha \in \br$ is a bifurcation parameter,
$A\colon \br \to L^\Gamma(V)$ is an analytic family of $\Gamma$-equivariant symmetric matrices governing the linear coupling, and $f\colon V \to V$ is a continuous nonlinearity satisfying:
\begin{enumerate}[label=($A_\arabic*$)]
  \item\label{bA1} $f$ is $\Gamma$-equivariant:
    $f(\gamma x) = \gamma f(x)$ for all $\gamma \in \Gamma$,
    $x \in V$.
  \item\label{bA2} $f$ is odd: $f(-x) = -f(x)$ for all $x \in V$.
  \item\label{bA3} $f(x) = o(|x|)$ as $x \to 0$.
  \item\label{bA4} There exists $M > 0$ such that
    $f(x) \cdot x > 0$ for all $|x| \geq M$.
\end{enumerate}
Condition~\ref{bA1} ensures that the nonlinearity respects the network symmetry, while~\ref{bA2} introduces a $\bz_2$-antipodal symmetry reflecting the physical reversibility of the restoring force.
Together, \ref{bA1}--\ref{bA2} guarantee that
\eqref{eq:system_bif} admits the symmetries of the product group $G := O(2) \times \Gamma \times \bz_2$, where the $O(2)$-factor encodes the autonomous time-translation and time-reversal symmetries and the $\bz_2$-factor encodes the antipodal symmetry of the nonlinearity.  Conditions~\ref{bA3}--\ref{bA4} are analytic
hypotheses: the former ensures that the trivial equilibrium $x \equiv 0$ is a solution for all parameter values (enabling linearization),
and the latter provides a superlinear restoring force at large amplitudes that confines solutions to bounded sets---a compactness condition essential for the degree-theoretic framework.
\subsection{Functional Reformulation} \label{sec:functional_reformulation}
Let $\mathscr H := H^2(S^1; V)$ be the Sobolev space of $2\pi$-periodic, $V$-valued functions equipped with the standard inner product and the isometric $G$-action
\begin{align*}
(e^{i\theta}, \gamma, \pm 1)u(t) := \pm \gamma u(t + \theta), \quad 
(\kappa, \gamma, \pm 1)u(t) := \pm \gamma u(-t).
\end{align*}
Following the construction in \cite{Ghanem_annales} (cf. also \cite{Carlos2019}), the problem of finding $p$-periodic solutions to \eqref{eq:system_bif} is equivalent to solving operator equation
\begin{align}\label{eq:operator_equation}
\mathscr F(\alpha,u) = 0, \quad (\alpha,u) \in \br \times \mathscr H,    
\end{align}
where $\mathscr  F: \br \times \mathscr H \rightarrow \mathscr H$ is the family of compact perturbations of the identity
\begin{align} \label{def:operator_F}
\mathscr  F(\alpha,u) := u - \mathscr L^{-1}(\beta^2 N(j(u)) + \beta^2 A(\alpha) j(u) + j(u)),
\end{align}
defined in terms of the frequency $\beta:= p/(2\pi)$, the shifted Laplacian operator
\begin{align}\label{def:operator_L}
    \mathscr L: \mathscr H \rightarrow L^2(S^1;V), \quad \mathscr L u := {\red -}\ddot u + u,
\end{align}
the Nemytskii operator  $N:L^2(S^1;V) \rightarrow L^2(S^1;V)$, given by
$(Nu)(t) :=  f(u(t))$, and the compact Sobolev embedding $j:\mathscr H \hookrightarrow L^2(S^1;V)$.  The linearization of $\mathscr F$
at the origin is
\begin{align*}
\mathscr  A(\alpha): \mathscr H \rightarrow \mathscr H, \quad \mathscr  A(\alpha)u := u - \mathscr L^{-1}(\beta^2 A(\alpha) u + u).
\end{align*}
We denote the critical set of \eqref{eq:operator_equation} by
$\Lambda := \{(\alpha,0) \in \br \times \mathscr H :
\mathscr A(\alpha) \text{ is not an isomorphism}\}$ and, at every isolated critical point $(\alpha_0,0) \in \Lambda$ flanked by regular values $\alpha_0^- < \alpha_0 < \alpha_0^+$, define the local
bifurcation invariant by
\[
\omega_{G}(\alpha_0) := \gdeg(\mathscr A(\alpha_0^-),B(\mathscr H)) - \gdeg(\mathscr A(\alpha_0^+),B(\mathscr H)).
\]
\subsubsection{The $G$-Isotypic Decomposition of $\mathscr H$}\label{sec:G_isotypic_decomp}
Let $\operatorname{Irr}(\Gamma) = \{\mathcal V_j\}_{j=0}^r$ be a complete list of the irreducible $\Gamma$-representations and consider the $\Gamma \times \bz_2$-isotypic decomposition of the state space
\[
V \simeq \bigoplus_{j=0}^r V_j, \quad V_j \simeq \ell_j \mathcal V_j^-,
\]
where the superscript indicates the antipodal $\bz_2$-action on the irreducible $\Gamma$-representation $\mathcal V_j$ and $\ell_j$ is the $\Gamma$-isotypic multiplicity of the $j$-th $\Gamma$-isotypic component $V_j$. Since $A(\alpha) : V \to V$ is $\Gamma$-equivariant for all parameter values $\alpha \in \br$, it admits a block matrix decomposition of the form
\[
A(\alpha) = \bigoplus_{j=0}^r A_j(\alpha), \quad A_j(\alpha) := A|_{V_j}(\alpha) : V_j \to V_j.
\]
In particular, on each $\Gamma$-isotypic component $V_j$, the spectrum of the restriction $A_j(\alpha): V_j \to V_j$ consists of a finite number of real-valued, analytic (by Rellich's theorem) eigenvalues
\[
\sigma(A_j(\alpha)) = \{ \zeta_{j,1}(\alpha), \ldots, \zeta_{j,\ell_j}(\alpha) \}, \quad \zeta_{j,k} : \br \to \br.
\]
To ensure that the bifurcation parameter effectively modifies the spectrum of the linearization, we impose the following assumption
\begin{enumerate}[label=$(A_{\arabic*})$, start=0]
    \item\label{a0} The dependence of the coupling matrix $A(\alpha)$ on the bifurcation parameter $\alpha \in \br$ is such that the eigenvalues $\zeta_{j,k}(\alpha)$ are non-constant functions.
\end{enumerate}
We denote by $\mathcal W_0 \simeq \br$ the trivial irreducible $O(2)$-representation and for each $m \in \bn$ by $\mathcal W_m \simeq \bc$ the irreducible $O(2)$-representation equipped with the $m$-folded $O(2)$-action
\[
\theta w := e^{im\theta} \cdot w, \quad \kappa w := \overline{w}, \quad w \in \mathcal W_m,
\]
where `$\cdot$' is the standard complex multiplication and `$\overline{\phantom{w}}$' is complex conjugation. With this notation, we can identify each irreducible $G$-representation $\mathcal V_{m,j} := \mathcal W_m \otimes \mathcal V_{j}^-$ with an index pair $(m,j) \in \bn \cup \{0\} \times \{0,\ldots,r\}$. Since every $u \in \mathscr H$ has a Fourier decomposition of the form
\[
u(t) = \sum_{m=0}^\infty \cos(mt)a_m + \sin(mt)b_m, \quad a_m,b_m \in V,
\]
the Laplacian operator \eqref{def:operator_L} admits the spectrum $\sigma(\mathscr L) = \{m^2+1 : m \in \bn \cup \{0\} \}$ with the associated eigenspaces $\mathscr H_{m} := \{\cos(mt)a + \sin(mt)b : a,b \in V \}$. Clearly, one has $\mathscr H_{m} \simeq \mathcal W_m \otimes V$ for all $m \in \bn \cup \{0\}$. Notice also that $\mathscr H_m \subset \mathscr H$ admits the $\Gamma$-isotypic decomposition
\[
\mathscr H_m = \bigoplus_{j=0}^r \mathscr H_{m,j}, \quad \mathscr H_{m,j}:= \{\cos(mt)a + \sin(mt)b : a,b \in V_j \},
\]
with $\mathscr H_{m,j} \simeq \ell_j \mathcal V_{m,j}$, for each $(m,j) \in \bn \cup \{0\} \times \{0,\ldots,r\}$. Therefore, the $G$-isotypic decomposition of $\mathscr H$ is given by
\begin{align} \label{eq:G_isotypic_decomp}
   \mathscr H = \overline{\bigoplus_{j=0}^r \bigoplus\limits_{m=0}^\infty \mathscr H_{m,j}},
\end{align}
where the closure is taken in $\mathscr H$. As a $G$-equivariant linear operator, $\mathscr A(\alpha):\mathscr H \rightarrow \mathscr H$ respects the decomposition \eqref{eq:G_isotypic_decomp} in the sense that
$\mathscr A(\alpha)(\mathscr H_{m,j}) \subset \mathscr H_{m,j}$ for all $(m,j) \in \bn \cup \{0\} \times \{0,\ldots,r\}$. Adopting the notation $\mathscr A_{m,j}(\alpha) := \mathscr A(\alpha)|_{\mathscr H_{m,j}}: \mathscr H_{m,j} \rightarrow \mathscr H_{m,j}$, it follows that the spectrum of $\mathscr A(\alpha)$ is given by
\begin{align*} 
 \sigma (\mathscr A(\alpha))= \bigcup\limits_{m=0}^{\infty} \bigcup_{j=0}^r \sigma(\mathscr A_{m,j}(\alpha)), \quad \sigma(\mathscr A_{m,j}(\alpha)) = \left\{ \frac{m^2 - \beta^2 \zeta_{j,k}(\alpha)}{1+m^2} : \zeta_{j,k}(\alpha) \in \sigma(A_j)\right\}.
\end{align*}
We identify the eigenvalues of $\mathscr A(\alpha)$ with the notation
\[
\mu_{m,j,k}(\alpha) := \frac{m^2 - \beta^2 \zeta_{j,k}(\alpha)}{1+m^2},
\]
noticing that $(\alpha_0,0) \in \Lambda$ if and only if there exists an index triple $(m,j,k) \in \bn \cup \{0\} \times \{0,\ldots,r\} \times \{1,\ldots,\ell_j\}$ satisfying
\[
\alpha_0 =\zeta_{j,k}^{-1}(m^2/\beta^2).
\]
\begin{lemma}
Under assumption \ref{a0}, the critical set $\Lambda$ is discrete.
\end{lemma}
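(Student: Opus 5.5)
The plan is to show that every bounded parameter interval $[c,d]$ contains only finitely many critical values; discreteness of $\Lambda \subset M \simeq \br$ then follows. By the computation preceding the lemma, $(\alpha_0,0)\in\Lambda$ if and only if $\zeta_{j,k}(\alpha_0) = m^2/\beta^2$ for some triple $(m,j,k)$. Hence
\[
\Lambda \cap ([c,d]\times\{0\}) \;=\; \bigcup_{j=0}^r \bigcup_{k=1}^{\ell_j} \bigcup_{m\ge 0} \{\alpha\in[c,d] : \zeta_{j,k}(\alpha) = m^2/\beta^2\}.
\]
The indices $j,k$ range over a finite set, so two things must be controlled: the number of relevant $m$, and the number of solutions for each fixed $(m,j,k)$.

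First, only finitely many $m$ contribute. Each $\zeta_{j,k}$ is continuous (indeed analytic, by Rellich's theorem as recalled above), so it is bounded on the compact interval $[c,d]$, say $|\zeta_{j,k}|\le C$ there. The equation $\zeta_{j,k}(\alpha)=m^2/\beta^2$ then forces $m^2 \le \beta^2 C$, leaving finitely many $m$.

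Second, for a fixed triple, the set $Z_{m,j,k}:=\{\alpha : \zeta_{j,k}(\alpha)-m^2/\beta^2 = 0\}$ is the zero set of a real-analytic function on $\br$. By assumption~\ref{a0}, $\zeta_{j,k}$ is non-constant, so this function is not identically zero. By the identity theorem its zeros cannot accumulate at any point of $\br$: otherwise all derivatives would vanish at the limit point and the function would vanish on all of $\br$, which is connected. Hence $Z_{m,j,k}\cap[c,d]$ is finite. A finite union of finite sets is finite, so $\Lambda$ meets each compact parameter window in finitely many points. In particular every critical point is isolated, as required for the local invariants $\omega_G(\alpha_0)$ to be defined.

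The main obstacle is the analytic labeling of the eigenvalues. Rellich's theorem gives analytic branches $\zeta_{j,k}$ for an analytic symmetric family, but the branches may cross. Condition~\ref{a0} must therefore be read as a statement about these analytic branches, not about ordered eigenvalues, which are only continuous and piecewise analytic. I would note this explicitly, and observe that the argument only uses that each branch is analytic and non-constant on $\br$, which Rellich's theorem together with~\ref{a0} supplies.
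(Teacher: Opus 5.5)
Your proof is correct and follows the same route as the paper. Only finitely many Fourier modes $m$ can contribute in a bounded parameter window: you make explicit the boundedness of $\zeta_{j,k}$ on $[c,d]$, which the paper's ``$m^2/\beta^2 \to \infty$'' uses tacitly, and for each fixed $(m,j,k)$ the zero set of the analytic, not-identically-zero function $\zeta_{j,k}-m^2/\beta^2$ is discrete. Your closing remark that \ref{a0} must be read for the Rellich analytic branches rather than the ordered eigenvalues is a worthwhile clarification that the paper leaves implicit.
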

\begin{proof}
Since $m^2/\beta^2 \to \infty$ as $m \to \infty$, any bounded interval $I \subset \br$ contains critical points from only finitely many Fourier modes.  For a fixed mode~$m$, the function
$\psi_{m,j,k}(\alpha) := \zeta_{j,k}(\alpha) - m^2/\beta^2$ is analytic and, by~\ref{a0}, not identically zero. Its zero set is therefore discrete.
\end{proof}
\subsection{Local and Global Bifurcation of Periodic Solutions}\label{sec:computation_lbi}

To apply the abstract results of
Section~\ref{sec:bifurcation_setup}, we translate the Ize pair hypothesis into a computable spectral condition. For any orbit type $(H) \in \Phi_0(G; \mathscr H)$, define the set of \emph{geometrically compliant} indices
\[
\mathcal I_H := \{(m,j) : \dim \mathcal V_{m,j}^H \equiv 1 \pmod 2\}.
\]
Given a parameter interval $[a,b]$ with regular end points, the spectral flow representation $\mathbb V_{[a,b]} \simeq \bigoplus_{j=0}^r \bigoplus_{m=0}^\infty \rho_{m,j} \mathcal V_{m,j}$ constitutes an Ize pair with respect to a maximal orbit type $(H) \in \Phi_0(G; \mathbb V_{[a,b]})$ if and only if  the total crossing parity across compliant $G$-isotypic indices is odd:
\begin{equation}\label{eq:ize_parity_condition}
\sum_{(m,j) \in \mathcal I_H} \rho_{m,j} \equiv 1 \pmod 2,
\end{equation}
where $\rho_{m,j} \in \{0,1\}$ is the net parity of negative eigenvalues crossing zero in the $G$-isotypic component $\mathscr H_{m,j}$ across the interval $[a,b]$. Notice that, since the critical set $\Lambda$ is discrete, this sum always admits a finite number of non-zero terms. We leverage this characterization to formulate the following bifurcation guarantees.
\begin{theorem}[Local Bifurcation of Periodic Solutions]\label{thm:local_bif_main}
Let $(\alpha_0,0) \in \Lambda$ be any critical point of the periodic system \eqref{eq:operator_equation} with the deleted regular neighborhood $[\alpha_0^-, \alpha_0^+] \setminus \{\alpha_0\}$. If there exists a maximal orbit type $(H) \in \Phi_0(G; \mathbb V_{[\alpha_0^-,\alpha_0^+]})$ satisfying the parity condition \eqref{eq:ize_parity_condition} across the interval $[\alpha_0^-, \alpha_0^+]$, then there exists a branch $\mathscr C_H$ of non-trivial $p$-periodic solutions to \eqref{eq:system_bif} bifurcating from $(\alpha_0,0)$ with symmetries at least $(H)$.
\end{theorem}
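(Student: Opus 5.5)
The argument reduces Theorem~\ref{thm:local_bif_main} to the abstract Theorem~\ref{thm:local_bif_ize}. The work is to check that the operator equation \eqref{eq:operator_equation} fits the framework of Section~\ref{sec:bifurcation_setup}, and that condition \eqref{eq:ize_parity_condition} is exactly the Ize pair condition for $(G,\mathbb V_{[\alpha_0^-,\alpha_0^+]})$.

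The first step concerns the functional setting. The map $\mathscr F$ in \eqref{def:operator_F} is a compact perturbation of the identity, since $j$ is a compact embedding and $\mathscr L^{-1}$ is bounded. It is $G$-equivariant: \ref{bA1}, \ref{bA2} and the $\Gamma$-equivariance of $A(\alpha)$ give $\Gamma\times\bz_2$-equivariance, and autonomy gives $O(2)$-equivariance. By \ref{bA3} it satisfies $\mathscr F(\alpha,0)=0$, and its Fréchet derivative at $u=0$ is $\mathscr A(\alpha)$, depending continuously on $\alpha$.

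The derivative claim rests on the estimate $\|N(j(u))\|_{L^2}=o(\|u\|_{H^2})$. This follows from \ref{bA3} together with the embedding $H^2(S^1;V)\hookrightarrow C(S^1;V)$, which gives the required uniform control.

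By the preceding lemma, $\Lambda$ is discrete, so $(\alpha_0,0)$ is an isolated critical point. Hence regular values $\alpha_0^\pm$ exist and $\omega_G(\alpha_0)$ is well defined.

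The second step translates the parity condition. The negative spectrum of $\mathscr A(\alpha)$ is real and explicitly given by the eigenvalues $\mu_{m,j,k}(\alpha)$. It lies only in finitely many modes $m$, because $\mu_{m,j,k}\to 1$ as $m\to\infty$. So $E^-(\alpha_0^\pm)$ is finite-dimensional. Each eigenvalue $\mu_{m,j,k}$ carries one copy of $\mathcal V_{m,j}$, so $c_{m,j}(\alpha)=\#\{k:\mu_{m,j,k}(\alpha)<0\}$ and $\rho_{m,j}$ is its net change mod $2$.

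Applying the Parity Criterion (Theorem~\ref{thm:parity_guarantee}) to $\mathbb V_{[\alpha_0^-,\alpha_0^+]}$ gives
\[
\dim\mathbb V^H_{[\alpha_0^-,\alpha_0^+]}-\dim\mathbb V^G_{[\alpha_0^-,\alpha_0^+]}\equiv\sum_{(m,j)\neq(0,0)}\rho_{m,j}\dim\mathcal V_{m,j}^H\equiv\sum_{(m,j)\in\mathcal I_H\setminus\{(0,0)\}}\rho_{m,j}\pmod 2 .
\]
One subtlety is the index $(0,0)$. Since $\bz_2$ acts antipodally on $\mathcal V_0^-$, the representation $\mathcal V_{0,0}$ is \emph{not} the trivial $G$-representation, so no index needs to be excluded. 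In fact no $\mathcal V_{m,j}$ is $G$-trivial, and $\mathbb V^G=0$. Therefore \eqref{eq:ize_parity_condition} is precisely the Ize pair condition for the maximal type $(H)$.

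The third step applies Theorem~\ref{thm:local_bif_ize}, which gives $\operatorname{coeff}^H(\omega_G(\alpha_0))\neq0$ and a branch $\mathscr C_H$ of nontrivial solutions of \eqref{eq:operator_equation} with symmetries at least $(H)$. It then remains to transport these back to $p$-periodic solutions of \eqref{eq:system_bif} via the rescaling $x(t)=u(t/\beta)$. The equivalence established in \cite{Ghanem_annales} gives this correspondence, preserving isotropy and nontriviality.

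I expect the main obstacle to be the bookkeeping in the second step: confirming that the isotypic counting of negative eigenvalues matches the definition of $c_i$, including multiplicities $\ell_j$ and eigenvalues $\zeta_{j,k}$ that may coincide, and checking the triviality issue for $(0,0)$. The analytic verifications in the first step are standard and only need to be cited.
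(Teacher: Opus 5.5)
Your proposal is correct and follows essentially the same route as the paper. Like the paper, you check that $\mathscr F$ satisfies the abstract hypotheses of Section~\ref{sec:bifurcation_setup}, identify \eqref{eq:ize_parity_condition} with the Ize pair condition via Theorem~\ref{thm:parity_guarantee}, and conclude with Theorem~\ref{thm:local_bif_ize}, i.e.\ Krasnosel'skii's theorem. Your extra bookkeeping supplies details that the paper's short proof leaves implicit, notably the observation that the antipodal $\bz_2$-action makes every $\mathcal V_{m,j}$ (including $\mathcal V_{0,0}$) non-trivial, so that $\mathbb V^G_{[\alpha_0^-,\alpha_0^+]}=0$ and no index has to be removed from $\mathcal I_H$.
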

\begin{proof}
The result follows as a direct application of Theorem~\ref{thm:local_bif_ize}. Since the completely continuous vector field $\mathscr F$ defined in \eqref{def:operator_F} satisfies the abstract hypotheses outlined in Section~\ref{sec:bifurcation_setup}, the local topological jump across the critical point is completely captured by the spectral flow representation. The parity condition \eqref{eq:ize_parity_condition} is equivalent to the Ize pair hypothesis via Theorem~\ref{thm:parity_guarantee}, guaranteeing that $\operatorname{coeff}^H(\omega_G(\alpha_0)) \neq 0$. Krasnosel'skii's Theorem (Theorem~\ref{thm:abstract_local_bif}) then ensures the emergence of the $H$-symmetric branch of non-trivial periodic solutions.
\end{proof}

\subsubsection{Fixed-Point Reduction}\label{sec:fixed_point_reduction}
Theorem~\ref{thm:local_bif_main} guarantees a bifurcating branch of non-trivial periodic solutions, but a priori this branch could include constant (stationary) equilibria---functions $u(t) \equiv x_0 \neq 0$ that satisfy the periodicity condition trivially. To obtain genuinely oscillatory solutions, we restrict the problem to a subspace that excludes constants by design.
\vs
Consider the subgroup $\bm K := \{(-1,-1),(1,1)\} \leq O(2) \times \bz_2 \leq G$. A function $u \in \mathscr H^{\bm K}$ satisfies the half-period anti-symmetry $u(t+\pi) = -u(t)$, which precludes constant solutions. The $\bm K$-fixed-point space admits the isotypic decomposition
\begin{align*}
\mathscr H^{\bm K} = \overline{\bigoplus_{j=0}^r\bigoplus\limits_{m \in 2\bn-1} \mathscr H_{m,j}},
\end{align*}
consisting only of the odd Fourier mode components. This space is an isometric Hilbert representation of the group $\bm G := N(\bm K)/\bm K \simeq O(2) \times \Gamma $. We denote the restricted bifurcation map and linearization by
\[
\mathscr  F^{\bm K} := \mathscr  F|_{\br \times \mathscr H^{\bm K}}: \br \times \mathscr H^{\bm K} \to \mathscr H^{\bm K}, \quad \mathscr  A^{\bm K}(\alpha) := \mathscr  A(\alpha)|_{\mathscr H^{\bm K}}:\mathscr H^{\bm K} \to \mathscr H^{\bm K},
\]
with the $\bm K$-fixed critical set $\Lambda^{\bm K} := \{(\alpha,0) \in M : \mathscr  A^{\bm K}(\alpha) \text{ is singular}\}$. The local bifurcation invariant in this restricted setting is
\begin{align*}
\omega_{\bm G}(\alpha_0) := \bm G\text{-deg}(\mathscr  A^{\bm K}(\alpha_0^-), B(\mathscr H^{\bm K})) - \bm G\text{-deg}(\mathscr  A^{\bm K}(\alpha_0^+), B(\mathscr H^{\bm K})),
\end{align*}
and Theorem~\ref{thm:abstract_local_bif} applies with $G$ replaced by $\bm G$ and $\mathscr H$ by $\mathscr H^{\bm K}$, with the advantage that any branch of nontrivial solutions to $\mathscr  F^{\bm K}(\alpha,u) = 0$ consists exclusively of non-stationary solutions.

\begin{theorem}[Global Bifurcation of Periodic Solutions]\label{thm:global_bif_main}
Let $[a, b]$ be a parameter interval with regular endpoints containing at least one critical parameter value $a < \alpha_0 < b$ satisfying $\omega_{\bm G}(\alpha_0) \neq 0$. If there exists a maximal orbit type $(H) \in \Phi_0(\bm G; \mathbb V_{[a,b]})$ satisfying the parity condition \eqref{eq:ize_parity_condition} across the global interval $[a,b]$, then there exists a branch $\mathscr C_H$ of strictly non-stationary, $p$-periodic solutions with symmetries at least $(H)$ that bifurcates from the trivial branch inside $(a,b)$ and satisfies $\mathscr{C}_H \not\subset (a,b) \times B_R(\mathscr{H}^{\bm{K}})$ for any $R > 0$.
\end{theorem}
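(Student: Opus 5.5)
Work throughout in the restricted setting of Section~\ref{sec:fixed_point_reduction}: the group is $\bm G = N(\bm K)/\bm K \simeq O(2)\times\Gamma$ acting on the Hilbert representation $\mathscr H^{\bm K}$, the field is $\mathscr F^{\bm K}$, and the linearization is $\mathscr A^{\bm K}(\alpha)$. The plan is to check that this restricted problem satisfies the abstract hypotheses of Section~\ref{sec:bifurcation_setup}, and then follow the proof of Theorem~\ref{thm:global_bif_ize} with $G$ replaced by $\bm G$.

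\emph{Verification of the hypotheses.} $\mathscr F^{\bm K}$ is a compact perturbation of the identity on $\mathscr H^{\bm K}$, because $\mathscr L^{-1}$ composed with the compact embedding $j$ is compact. It is $\bm G$-equivariant, and it preserves $\mathscr H^{\bm K}$ because $\mathscr F$ is $G$-equivariant. Its linearization at $0$ is $\mathscr A^{\bm K}(\alpha)$ by \ref{bA3}.

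\emph{Discreteness of the critical set.} The eigenvalues of $\mathscr A^{\bm K}(\alpha)$ are the $\mu_{m,j,k}(\alpha)$ with $m$ odd. By the discreteness lemma, $\Lambda^{\bm K}\cap([a,b]\times\{0\})$ is finite. Write it as $\alpha_{0}<\dots<\alpha_n$, each isolated, with $\alpha_0^-$ and $\alpha_n^+$ chosen in the regular end intervals.

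\emph{Nonvanishing of the telescoped invariant.} The telescoping identity \eqref{eq:telescope} for $\bm G$ gives $\sum_k \omega_{\bm G}(\alpha_k)=\bm G\text{-deg}(\mathscr A^{\bm K}(a),B)-\bm G\text{-deg}(\mathscr A^{\bm K}(b),B)$. Here I use that $a,b$ are regular, so we may take $\alpha_0^-=a$ and $\alpha_n^+=b$ by homotopy invariance along regular stretches. The negative eigenspaces $E^-(a)$ and $E^-(b)$ are finite-dimensional, since $\mu_{m,j,k}>0$ for large $m$. Hence $\mathbb V_{[a,b]}$ is a finite-dimensional $\bm G$-representation.

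By Theorem~\ref{thm:parity_guarantee}, the parity condition \eqref{eq:ize_parity_condition} for maximal $(H)$ says exactly that $(\bm G,\mathbb V_{[a,b]})$ is an Ize pair. Indeed, $\dim\mathbb V^H-\dim\mathbb V^{\bm G}\equiv\sum_{(m,j)\in\mathcal I_H}\rho_{m,j}$, since nontrivial components have no $\bm G$-fixed vectors. Proposition~\ref{prop:spectral_flow_evaluation} then gives $\operatorname{coeff}^H\big(\sum_k\omega_{\bm G}(\alpha_k)\big)\neq0$. It follows that some $\alpha_k$ has $\operatorname{coeff}^H(\omega_{\bm G}(\alpha_k))\ne0$, which also recovers the hypothesis $\omega_{\bm G}(\alpha_0)\neq0$.

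\emph{Unboundedness via the Rabinowitz alternative.} Suppose every branch emanating from $\{\alpha_k\}$ stays inside $(a,b)\times B_R(\mathscr H^{\bm K})$ for some $R$. Apply Theorem~\ref{thm:Rab} with $\mathcal U=(a,b)\times B_{R'}$ for $R'>R$. Its boundary avoids $\Lambda^{\bm K}$ since $a,b$ are regular, so alternative (a) fails for each branch. Alternative (b) then holds for each branch.

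Branches meeting the trivial line are unions of connected components of the closure of nontrivial solutions in $\overline{\mathcal U}$. Distinct branches are therefore either equal or disjoint, and they partition $\{\alpha_0,\dots,\alpha_n\}$ into blocks on which $\omega_{\bm G}$ sums to zero. This forces $\sum_k\omega_{\bm G}(\alpha_k)=0$, a contradiction. Running the argument coefficient-wise at $(H)$ produces a branch $\mathscr C_H$ that meets $\overline{\mathscr S^H}$ and reaches $\partial\mathcal U$ for every $R'$.

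\emph{Main obstacle.} The step I expect to be hardest is making the last conclusion precise: that $\mathscr C_H$ both carries symmetry $(H)$ and is the unbounded one. For this I would apply the Rabinowitz argument inside $H$-fixed-point subspaces, working in $\br\times(\mathscr H^{\bm K})^H$ with the $W(H)$-degree. The point is that $\operatorname{coeff}^H$ equals the coefficient of the free orbit type for the Weyl-group degree on $(\mathscr H^{\bm K})^H$, so the summation contradiction can be run there. Solutions found this way automatically satisfy $G_u\ge H$.

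Finally, the solutions are strictly non-stationary because every element of $\mathscr H^{\bm K}$ satisfies $u(t+\pi)=-u(t)$. A nonzero constant cannot satisfy this, and the branch bifurcates from $\alpha_k\in(a,b)$.
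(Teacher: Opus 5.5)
Your proposal is correct and follows the same route as the paper. The common steps are: finiteness of $\Lambda^{\bm K}$ on $[a,b]$; the Parity Criterion together with Proposition~\ref{prop:spectral_flow_evaluation}, applied to $\bm G$ on $\mathscr H^{\bm K}$, to get a nonzero $(H)$-coefficient of the telescoped sum; the Rabinowitz alternative on the cylinders $(a,b)\times B_R(\mathscr H^{\bm K})$; and the half-period antisymmetry for non-stationarity. Your partition-and-coefficient argument simply makes explicit what the paper states in one sentence.

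The fixed-point-subspace step you flag as the main obstacle is not needed for the theorem as stated. In the paper, ``symmetries at least $(H)$'' only means $\mathscr C\cap\overline{\mathscr S^H}\neq\emptyset$, and this already follows once your coefficient-wise argument places some $\alpha_k$ with $\operatorname{coeff}^H(\omega_{\bm G}(\alpha_k))\neq 0$ on the unbounded branch. Running the Rabinowitz argument in $\br\times(\mathscr H^{\bm K})^H$ with the $W(H)$-degree would instead give a stronger conclusion: an unbounded continuum consisting entirely of $H$-symmetric solutions.
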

\begin{proof}
Since the critical set is discrete, its intersection with the bounded interval $[a,b]$ is strictly finite. Restricting the operator to the $\bm K$-fixed point space $\mathscr H^{\bm K}$, we apply Proposition~\ref{prop:spectral_flow_evaluation} relative to the restricted group $\bm G$. The parity condition \eqref{eq:ize_parity_condition} ensures that the global spectral flow representation constitutes an Ize pair, which guarantees that the $H$-coefficient in the total sum of local bifurcation invariants $\sum_{\alpha_k \in (a,b)} \omega_{\bm G}(\alpha_k)$ is strictly non-zero. 

For any arbitrary radius $R > 0$, consider the open bounded invariant cylinder $\mathcal U := (a,b) \times B_R(\mathscr H^{\bm K})$. By the Rabinowitz Alternative (Theorem~\ref{thm:Rab}), the non-zero topological degree sum strictly rules out the possibility of the branch $\mathscr C_H$ forming a closed loop within $\mathcal U$. Consequently, $\mathscr C_H$ must intersect the boundary $\partial \mathcal U$. This implies that the branch either grows unbounded in amplitude (intersecting $[a,b] \times \partial B_R(\mathscr H^{\bm K})$ for arbitrarily large $R$) or it crosses the parameter bounds (intersecting $\{a,b\} \times \mathscr H^{\bm K}$). 

Finally, since every element $u \in \mathscr H^{\bm K}$ satisfies the half-period anti-symmetry $u(t + p/2) = -u(t)$, the only constant solution satisfying this property is the trivial solution $u \equiv 0$. The branch is thereby protected from terminating at any non-trivial stationary equilibrium, ensuring the solutions remain strictly non-stationary globally.
\end{proof}

\subsection{Methodological Comparison: The $\Gamma = D_8$ Symmetric Network}
\label{sec:methodological_comparison}
To illustrate the computational advantage of the Ize pair framework in a general bifurcation setting, we contrast it directly with the standard basic-degree pipeline utilized in recent literature (cf.\ \cite{Ghanem_annales}). Consider a general parameter crossing in a $D_8$-symmetric network of coupled oscillators, where the full symmetry group is $G = O(2) \times D_8 \times \bz_2$. 
\vs
To specify the relevant symmetries, recall that the irreducible $G$-representations take the form $\mathcal{V}_{m,j} := \mathcal{W}_m \otimes \mathcal{V}_j^-$, where $m \in \bn \cup \{0\}$ denotes the temporal Fourier mode and $j \in \{0, 1, 2, 3, 4\}$ indexes the irreducible spatial representations of $D_8$. Specifically, $\mathcal{V}_0 \simeq \br$ is the trivial (fully synchronous) representation, $\mathcal{V}_1, \mathcal{V}_2, \mathcal{V}_3 \simeq \bc$ are the standard two-dimensional geometric representations, and $\mathcal{V}_4 \simeq \br$ is the one-dimensional alternating representation.
\vs
Suppose the bifurcation parameter $\alpha$ crosses an isolated critical value $\alpha_0$, resulting in the following shift in the negative eigenspaces at the flanking regular parameter values $\alpha_0^- < \alpha_0 < \alpha_0^+$:
\[
E(\alpha_0^-) \simeq \mathcal{V}_{1,0} \oplus \mathcal{V}_{1,1}, \quad \text{ and } \quad E(\alpha_0^+) \simeq \mathcal{V}_{1,0} \oplus \mathcal{V}_{1,2}.
\]
In this scenario, the irreducible $G$-representation $\mathcal{V}_{1,0}$ acts as a ``spectator" mode, while $\mathcal{V}_{1,1}$ and $\mathcal{V}_{1,2}$ cross the origin in opposite directions. We wish to guarantee the emergence of a non-stationary branch with the maximal orbit type $(H) \in \Phi_0(G)$. Using the amalgamated notation for product subgroups (see Appendix \ref{sec:product_geometry}), we examine the maximal subgroup:
\[
  H := D_{4} \prescript{\bz_1}{}\times_{D_4}^{\bz_4^d} D_8^p,
\]
where if $D_8$ is generated by the rotation 
$\gamma := e^{\frac{2 \pi i}{8}}$ and the relection $\kappa$, we have adopted the following ancillary shorthand $\bz_4^d := \{ (1,1), (\gamma^2,-1), (\gamma^4,1), (\gamma^6,-1) \} \leq D_8 \times \bz_2$ and $D_8^p := D_8 \times \bz_2$.
\vs
\noi \textbf{The Basic-Degree Pipeline:} 
In the standard equivariant degree framework, the topological change is captured by evaluating the local bifurcation invariant as the difference of two degrees in the Burnside ring $A(G)$. For this eigenvalue crossing, the invariant is the difference of the respective basic degree products:
\[
  \omega_G(\alpha_0) = \left( \deg_{\mathcal{V}_{1,0}} \cdot \deg_{\mathcal{V}_{1,1}} \right) - \left( \deg_{\mathcal{V}_{1,0}} \cdot \deg_{\mathcal{V}_{1,2}} \right) \;\in A(G).
\]
Determining whether $(H)$ bifurcates requires extracting the integer coefficient $\operatorname{coeff}^H(\omega_G(\alpha_0))$. Even though $\mathcal{V}_{1,0}$ does not cross the origin, its basic degree must be carried through the multiplication on both sides before the subtraction can be resolved. Since Burnside ring multiplication of involutive elements requires evaluating recursive sums across a dense isotropy lattice, this task is highly non-trivial. A GAP routine (see \cite{GAP} for information on the EquiDeg GAP package) to verify this coefficient requires instantiating the group and resolving the algebra computationally:
\begin{lstlisting}[language=GAP, caption={GAP routine to compute the local bifurcation invariant $\omega_G(\alpha_0)$ in the Burnside ring $A(O(2) \times D_8 \times \mathbb{Z}_2)$.}, label={lst:gap_basic_degree}]
LoadPackage("EquiDeg");
# Instantiate G = O(2) x D_8 x Z_2
o2:=OrthogonalGroupOverReal(2);
d8:=pDihedralGroup(8);
z2:=pCyclicGroup(2);
# generate D8xZ2
g1:=DirectProduct(d8,z2);
# set names for subgroup conjugacy classes in D8xZ2
SetCCSsAbbrv(g1,["Z1","Z2","D1t","D1z","D1","Z1m","Z1p","D1zt","D1pt","D1p","D2","Z4","D2t","D2zt","D2d","Z4d","D2dt","D2z","Z2p","Z4p","D4dt","D2p","D4","D2pt","D4z","D4zt","Z8","Z8d","D4t","D4d","D4p","Z8p","D8","D4pt","D8d","D8z","D8dt","D8p"]);
# generate O(2)xD8xZ2
G:=DirectProduct(o2,g1);
ccs:=ConjugacyClassesSubgroups(G);
irrs := Irr(G);

# Compute the basic degrees for the relevant isotypic components
deg0 := BasicDegree(irrs[1,1]);  # V_{1,0} (Spectator mode)
deg1 := BasicDegree(irrs[1,9]);  # V_{1,1}
deg2 := BasicDegree(irrs[1,11]); # V_{1,2}

# Compute the local bifurcation invariant in the Burnside ring
omega := (deg0 * deg1) - (deg0 * deg2);
\end{lstlisting}
Running Listing \ref{lst:gap_basic_degree} confirms that $\operatorname{coeff}^H(\omega_G(\alpha_0)) = -2 \neq 0$, successfully guaranteeing bifurcation via the standard basic degree pipeline.

\vs
\noi \textbf{The Spectral Flow Pipeline:}
Under the framework developed in this paper, we bypass the Burnside ring multiplication entirely by tracking the net topological jump. We begin by constructing the mod-2 spectral flow representation (Definition~\ref{def:spectral_flow_representation}):
\[
  \mathbb{V}_{[\alpha_0^-,\alpha_0^+]} = \bigoplus_{i \in \mathcal{I}} \rho_i \mathcal{V}_{1,i}.
\]
Since the spectator mode $\mathcal{V}_{1,0}$ remains negative across the interval, its crossing parity is $\rho_0 \equiv 1 - 1 \equiv 0 \pmod{2}$. The changing modes $\mathcal{V}_{1,1}$ and $\mathcal{V}_{1,2}$ admit parities of $1$, reducing the spectral flow representation to:
\[
\mathbb{V}_{[\alpha_0^-,\alpha_0^+]} = \mathcal{V}_{1,1} \oplus \mathcal{V}_{1,2}.
\]
To guarantee bifurcation for $(H)$, Theorem~\ref{thm:local_bif_main} requires only that $(G, \mathbb{V}_{[\alpha_0^-,\alpha_0^+]})$ constitutes an Ize pair. By the Parity Criterion (Theorem~\ref{thm:parity_guarantee}), this reduces strictly to checking the parity of the difference in fixed-point dimensions on this spectral flow representation. Since the representation contains only odd temporal modes, the trivial representation $\mathcal V_{0,0} \simeq \br$ is absent, making $\dim \mathbb V_{[\alpha_0^-,\alpha_0^+]}^G = 0$. Thus, we simply compute the parity of the $H$-fixed-point space:
\[
  \dim \mathbb{V}_{[\alpha_0^-,\alpha_0^+]}^H - \dim \mathbb{V}_{[\alpha_0^-,\alpha_0^+]}^G = \dim \mathcal{V}_{1,1}^H + \dim \mathcal{V}_{1,2}^H - 0.
\]
From the fixed-point data geometrically verified for $D_8$ (cf.\ \cite{Ghanem_annales}), we know that $(H)$ is maximal in $\mathcal{V}_{1,1}$ with $\dim \mathcal{V}_{1,1}^H = 1$. Conversely, $(H)$ is absent from the isotropy lattice of $\mathcal{V}_{1,2}$, meaning its fixed-point space on that component is strictly trivial ($\dim \mathcal{V}_{1,2}^H = 0$). Therefore, the total fixed-point dimension is $1 + 0 = 1 \equiv 1 \pmod{2}$. The Parity Criterion is met, and bifurcation is guaranteed. The spectator mode $\mathcal{V}_{1,0}$ is algebraically eliminated before the geometric evaluation begins, and the entire algebraic obstruction is resolved by a single dimension count. Basic degree computations, recurrence formulas, and Burnside ring arithmetic are rendered entirely unnecessary.

\subsection{Constructing the Compliant Index Set $\mathcal{I}_H$} \label{sec:compliant_set}

With the structure of amalgamated fixed-point spaces established, we return to the specific functional setting of our periodic bifurcation problem. Recall from Section~\ref{sec:computation_lbi} that the detection of global equivariant bifurcation at a maximal orbit type $(H)$ hinges on identifying the \textit{compliant index set}:
\[
\mathcal I_H := \{(m,j) : \dim \mathcal V_{m,j}^H \equiv 1 \pmod 2\}.
\]
Recall that every isotropy subgroup $H \leq O(2) \times (\Gamma \times \bz_2)$ admits an amalgamated decomposition (see Appendix \ref{sec:product_geometry}) of the form
\[
H = K_O \prescript{\varphi_O}{}\times_{L}^{\varphi_\Gamma} K_\Gamma,
\]
with projection kernels $Z_O := \ker \varphi_O \leq K_O$ and $Z_\Gamma := \ker \varphi_\Gamma \leq K_\Gamma$. Applying Theorem~\ref{thm:amalgamated_fp} to our state space, the $H$-fixed-point space of a specific spatio-temporal mode decomposes as:
\begin{equation*} 
\mathcal V_{m,j}^H = (\mathcal W_m \otimes \mathcal V_{j}^-)^H \simeq \left( \mathcal W_m^{Z_O} \otimes (\mathcal V_{j}^-)^{Z_\Gamma} \right)^L.
\end{equation*}
To systematically construct $\mathcal{I}_H$ without relying on computational group theory software, one evaluates the geometric compatibility between the spatial symmetries of the network and the temporal frequencies of the dynamical system. By Corollary~\ref{cor:dim_parity}, this reduces to analyzing the $L$-isotypic multiplicities of the kernel-fixed-point spaces. 
\vs
Setting $\operatorname{Irr}(L) = \{\mathcal{L}_i\}_{i \in \mathcal{I}_L}$, we decompose the temporal and spatial kernel-fixed spaces into their $L$-isotypic components:
\[
\mathcal W_m^{Z_O} \simeq \bigoplus_{i \in \mathcal{I}_L} a_{m,i} \mathcal{L}_i \quad \text{and} \quad (\mathcal V_{j}^-)^{Z_\Gamma} \simeq \bigoplus_{i \in \mathcal{I}_L} b_{j,i} \mathcal{L}_i,
\]
where the multiplicities $a_{m,i}$ and $b_{j,i}$ are non-negative integers. It follows directly from Corollary~\ref{cor:dim_parity} that the dimension of the $H$-fixed-point space is the scalar product of these multiplicity vectors, weighted by the endomorphism dimensions $d_i := \dim \operatorname{End}_L(\mathcal{L}_i)$:
\begin{equation} \label{eq:compliant_set_condition}
\dim \mathcal V_{m,j}^H = \sum_{i \in \mathcal{I}_L} a_{m,i} b_{j,i} d_i.
\end{equation}
Consequently, the compliant index set $\mathcal{I}_H$ consists of those Fourier modes $m \in \bn \cup \{0\}$ and spatial isotypic indices $j \in \{0, 1, \dots, r\}$ that satisfy the resonance condition:
\begin{equation*}
\mathcal I_H = \left\{ (m,j) : \sum_{i \in \mathcal{I}_L} a_{m,i} b_{j,i} d_i \equiv 1 \pmod 2 \right\}.
\end{equation*}

\begin{example}[Explicit Construction of $\mathcal{I}_H$] \label{ex:explicit_compliant_set}
To demonstrate the systematic construction of the compliant set using the theory developed in Section~\ref{sec:compliant_set}, let us explicitly evaluate $\mathcal I_H$ for the maximal orbit type:
\[
  H = D_{4} \prescript{\bz_1}{}\times_{D_4}^{\bz_4^d} D_8^p \;\leq\; O(2) \times (D_8 \times \bz_2).
\]
Here, the projection subgroups are $K_O = D_4$ and $K_\Gamma = D_8 \times \bz_2$ (denoted in our shorthand as $D_8^p$). The respective kernels are $Z_O = \bz_1 = \{e\}$ and $Z_\Gamma = \bz_4^d = \langle (\gamma^2, -1) \rangle$, yielding the amalgamation quotient $L \simeq D_4$.

\vs
\noi \textbf{Step 1: The Spatial Kernel-Fixed Space.}
We first determine which spatial modes $j \in \{0, 1, 2, 3, 4\}$ survive the action of $Z_\Gamma$. For a vector $v \in \mathcal V_j^-$ to be fixed by the generator $z = (\gamma^2, -1)$, we require:
\[
  (\gamma^2, -1) \cdot v = - \rho_j(\gamma^2) v = v \implies \rho_j(\gamma^2) = -\id.
\]
Since $\gamma$ generates a spatial rotation by $2\pi/8$, the operator $\rho_j(\gamma^2)$ represents a rotation by $j (4\pi/8) = j \pi / 2$. For this to coincide with $-\id$ (a rotation by $\pi$), we must have $j \pi / 2 \equiv \pi \pmod{2\pi}$, implying $j \equiv 2 \pmod 4$. Thus, within our index range, $j=2$ is the \emph{only} spatial mode that admits a non-trivial fixed-point space. 

For $j=2$, $Z_\Gamma$ acts trivially, leaving $(\mathcal V_2^-)^{Z_\Gamma} = \mathcal V_2^-$. As a representation of the quotient $L \simeq D_4$, this forms the standard 2-dimensional geometric representation, which we denote $\mathcal{L}_{*}$. Therefore, the spatial multiplicity $b_{j,*} = 1$ for $j=2$, and $0$ otherwise.

\vs
\noi \textbf{Step 2: The Temporal Kernel-Fixed Space.}
Since $Z_O = \{e\}$, the temporal space $\mathcal W_m$ is entirely fixed by the temporal kernel, leaving $\mathcal W_m^{Z_O} = \mathcal W_m$. We must determine its $L$-isotypic decomposition under the action of $K_O = D_4$. The rotation generator of $D_4$ acts on time by shifting it by $T/4$, corresponding to a phase shift of $m \pi / 2$ in the $m$-th Fourier mode. 

To resonate with the spatial representation $\mathcal{L}_{*}$, this temporal phase shift must act as a primitive rotation by $\pm \pi/2$. This requires $m \pi / 2 \equiv \pm \pi/2 \pmod{2\pi}$, meaning $m$ must be odd. Thus, for any odd $m$, $\mathcal W_m \simeq \mathcal{L}_{*}$, yielding a temporal multiplicity $a_{m,*} = 1$. (For even $m$, the representation splits into 1-dimensional representations, yielding $a_{m,*} = 0$).
\vs
\noi \textbf{Step 3: Evaluating the Dimension.}
Applying the dimension formula (Equation~\ref{eq:compliant_set_condition}) to the non-zero multiplicities, we compute the dimension of the $H$-fixed-point space for any odd $m$ and $j=2$. Since $\mathcal{L}_{*}$ is absolutely irreducible over $\mathbb{R}$, Schur's Lemma guarantees that its endomorphism ring is strictly isomorphic to $\mathbb{R}$, yielding $\dim \operatorname{End}_L(\mathcal{L}_{*}) = 1$, such that
\[
\dim \mathcal V_{m,j}^H = a_{m,*} b_{j,*} \dim \operatorname{End}_L(\mathcal{L}_{*}) = (1)(1)(1) = 1.
\]
Since no other spatial modes survive the kernel projection, the compliant index set is explicitly defined as
\[
\mathcal I_H = \big\{ (m, 2) : m \in 2\bn - 1 \big\}.
\]
This algebraic reduction isolates the exact spatio-temporal harmonics where this specific maximal symmetry breaking can occur, requiring no computational search.
\end{example}
\vs
This concludes the main development. Appendix~\ref{sec:product_geometry} details the fixed-point geometry of product groups and establishes the dimensional parity formulas utilized in our explicit constructions. Appendix~\ref{sec:appendix} collects the axiomatic framework of the $G$-equivariant Leray--Schauder degree and proves the nontriviality criterion (Lemma~\ref{lemm:nontriviality_conditions}) invoked in the Introduction. Finally, Appendix~\ref{sec:appendix_esf} formalizes the mod-2 equivariant spectral flow within the representation ring $RO_2(G)$.
\appendix
\section{Fixed-Point Geometry in Product Groups} \label{sec:product_geometry}
To facilitate the identification of Ize pairs in the context of product groups $G := G_1 \times G_2$, we require a precise description of the subgroup lattice $\Phi_0(G)$ and the structure of fixed-point spaces for tensor product representations. We utilize the convention of \textit{amalgamated notation}, a consequence of Goursat's Lemma, which identifies subgroups of a product group via their projections.
\subsection{Amalgamated Subgroups}\label{sec:amalgamated_subgroups} Let $H$ be a closed subgroup of the product group $G := G_1 \times G_2$. The projections of $H$ onto the factors $G_1$ and $G_2$ define two subgroups $K_1 := \pi_1(H) \leq G_1$ and $K_2 := \pi_2(H) \leq G_2$. Furthermore, Goursat's Lemma implies that $H$ determines a pair of normal subgroups $Z_1 \unlhd K_1$ and $Z_2 \unlhd K_2$ such that the quotient groups are isomorphic, i.e., $K_1/Z_1 \simeq K_2/Z_2 \simeq L$. Consequently, $H$ is uniquely identified by the quintuple $(K_1,\varphi_1,K_2,\varphi_2,L)$ as follows:
\[
H := K_1^{\varphi_1} \times_L^{\varphi_2} K_2 = \{ (g_1, g_2) \in K_1 \times K_2 : \varphi_1(g_1) = \varphi_2(g_2) \},
\]
where $\varphi_1: K_1 \to L$ and $\varphi_2: K_2 \to L$ are epimorphisms with kernels $Z_1$ and $Z_2$, respectively. We refer to $L$ specifically as the \textit{amalgamation quotient} of $H$. Often, we are only interested in the conjugacy class $(H)$ of a subgroup $H \leq G$, which can be identified by the quadruple
$(K_1,Z_1,K_2,Z_2)$ as follows
\[
(H) := (K_1^{Z_1} \times^{Z_2} K_2).
\]

\subsection{Fixed-Point Spaces of Tensor Representations}
Let $\mathcal{U}$ be an orthogonal $G_1$-representation and $\mathcal{W}$ an orthogonal $G_2$-representation. The tensor product $\mathcal{V} := \mathcal{U} \otimes \mathcal{W}$ naturally becomes an orthogonal $G$-representation under the action $(g_1, g_2) \cdot (u \otimes w) := (g_1 u) \otimes (g_2 w)$. 
Given an amalgamated subgroup $H = K_1^{\varphi_1} \times_L^{\varphi_2} K_2$ (as defined in Section~\ref{sec:amalgamated_subgroups}), we would like to characterize its fixed-point space $\mathcal{V}^H$. To do so, we first establish that the amalgamation quotient $L$ inherits a natural action on the subspaces fixed by the kernels $Z_1 = \ker \varphi_1$ and $Z_2 = \ker \varphi_2$.
\begin{lemma}[Induced Quotient Action] \label{lem:induced_action}
Let $\mathcal{V}$ be a $G$-representation and $K \leq G$ a subgroup admitting an epimorphism $\varphi: K \to L$ with a kernel $Z \unlhd K$. The $Z$-fixed-point subspace $\mathcal{V}^{Z}$ is invariant under $K$, and the action of $K$ induces a  well defined $L$-action on $\mathcal{V}^{Z}$, denoted by $\wt{\varphi}: L \to GL(\mathcal{V}^{Z})$.
\end{lemma}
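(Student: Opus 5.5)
The plan is to check the two assertions of Lemma~\ref{lem:induced_action} in turn: first that $\mathcal V^Z$ is $K$-invariant, then that the $K$-action descends to a well-defined $L$-action $\wt\varphi$. Both should follow from the normality of $Z = \ker\varphi$ in $K$ together with the first isomorphism theorem $K/Z \simeq L$.

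\emph{Invariance.} Take $v \in \mathcal V^Z$ and $k \in K$. For any $z \in Z$, normality gives $k^{-1}zk \in Z$. Hence
\[
z(kv) = k\,(k^{-1}zk)\,v = kv,
\]
so $kv \in \mathcal V^Z$. This shows that each operator $\rho(k)$ maps $\mathcal V^Z$ into itself. Since the same holds for $k^{-1}$, the restriction $\rho(k)|_{\mathcal V^Z}$ is an invertible linear map of $\mathcal V^Z$. We therefore obtain a homomorphism $\rho_Z : K \to GL(\mathcal V^Z)$.

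\emph{Descent to $L$.} Observe that $Z \subseteq \ker \rho_Z$, since every $z \in Z$ fixes $\mathcal V^Z$ pointwise by definition. Given $\ell \in L$, choose any $k \in \varphi^{-1}(\ell)$, which exists because $\varphi$ is surjective, and set $\wt\varphi(\ell) := \rho_Z(k)$.
\begin{itemize}
\item \emph{Well-definedness.} If $\varphi(k) = \varphi(k')$, then $k' = kz$ for some $z \in Z$, so $\rho_Z(k') = \rho_Z(k)\rho_Z(z) = \rho_Z(k)$.
\item \emph{Homomorphism.} This follows from $\varphi(kk') = \varphi(k)\varphi(k')$ and the fact that $\rho_Z$ is a homomorphism.
\end{itemize}
Equivalently, $\rho_Z$ factors through $K/Z \simeq L$ via $\varphi$. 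If orthogonality or continuity is needed, it is inherited: the restriction of an orthogonal operator to an invariant subspace is orthogonal, and continuity passes to the quotient topology, since for compact groups $K/Z \to L$ is a homeomorphism.

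I expect no real obstacle here; the argument is routine. The only point requiring care is the conjugation step in the invariance argument, which is exactly where normality of $Z$ is used. The descent to $L$ is then a direct application of the universal property of the quotient.
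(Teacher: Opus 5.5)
Your proposal is correct and follows the paper's proof essentially step for step: normality of $Z$ gives $K$-invariance of $\mathcal V^{Z}$, and lifting through $\varphi$ together with $k' = kz$ gives well-definedness. Your extra checks (invertibility of the restricted operators, the homomorphism property of $\wt\varphi$, and continuity via $K/Z \simeq L$) are sound and fill in details the paper leaves implicit.
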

\begin{proof} First, we verify invariance. Take $u \in \mathcal{V}^{Z}$ and $k \in K$. For any kernel element $z \in Z$, the normality of $Z$ in $K$ implies $k^{-1} z k \in Z$. Thus, for any $(z,k,u) \in Z \times K \times \mathcal V^{Z}$, one has
\[
z \cdot (k \cdot u) = k \cdot (k^{-1} z k \cdot u)  = k \cdot u.
\]
Since $k \cdot u$ remains fixed by $Z$, the subspace $\mathcal{V}^{Z}$ is $K$-invariant. Next, we define the induced action. For any $l \in L$, choose a lift $k \in K$ such that $\varphi(k) = l$, and define
\[
\tilde{\varphi}(l)u := k \cdot u.
\]
To verify this is well-defined, suppose $k'$ is another lift satisfying $\varphi(k') = l$. Then $\varphi(k^{-1}k') = e_L$, which implies $k^{-1}k' \in Z$, or $k' = kz$ for some $z \in Z$. Since $u \in \mathcal{V}^{Z}$, we obtain
\[
k' \cdot u = (kz) \cdot u = k \cdot (z \cdot u) = k \cdot u.
\]
The action is therefore independent of the choice of lift.
\end{proof}
\vs
With this mechanism in place, we can state the main characterization result.
\begin{theorem}[Amalgamated Fixed-Point Formula] \label{thm:amalgamated_fp}Let $H = K_1^{\varphi_1} \times_L^{\varphi_2} K_2$ be a subgroup of $G_1 \times G_2$. The $H$-fixed-point space of the tensor representation $\mathcal{V} = \mathcal{U} \otimes \mathcal{W}$ is isomorphic to the subspace of $L$-invariants within the tensor product of the kernel-fixed subspaces:\begin{equation} \label{eq:fp_iso}(\mathcal{U} \otimes \mathcal{W})^H \simeq \left( \mathcal{U}^{Z_1} \otimes \mathcal{W}^{Z_2} \right)^L,\end{equation}where $L$ acts diagonally via the induced representations $\tilde{\varphi}_1$ and $\tilde{\varphi}_2$ defined in Lemma \ref{lem:induced_action}.\end{theorem}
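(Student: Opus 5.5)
The plan is to reduce the $H$-invariance condition to two successive invariance conditions: first under the subgroup $Z_1 \times Z_2 \leq H$, then under the quotient $H/(Z_1\times Z_2) \simeq L$. I would begin by checking that $Z_1 \times Z_2$ is indeed a normal subgroup of $H$. If $z_1 \in Z_1$, then $\varphi_1(z_1) = e_L = \varphi_2(e)$, so $(z_1,e) \in H$; similarly $(e,z_2) \in H$. Normality follows from $Z_i \unlhd K_i$. Next, the map $H \to L$, $(g_1,g_2) \mapsto \varphi_1(g_1) = \varphi_2(g_2)$, is surjective because $\varphi_1$ is an epimorphism and $\pi_1(H) = K_1$. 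Its kernel is exactly $Z_1 \times Z_2$. Hence $H/(Z_1\times Z_2) \simeq L$.

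The key step, and the one where I expect the main obstacle, is the identity
\[
(\mathcal U \otimes \mathcal W)^{Z_1 \times Z_2} = \mathcal U^{Z_1} \otimes \mathcal W^{Z_2}.
\]
To prove it, I would use averaging projections, since all groups are compact (Haar measure). Put $P_1 := \int_{Z_1} z\,dz$ on $\mathcal U$ and $P_2 := \int_{Z_2} z\,dz$ on $\mathcal W$; these are the projections onto $\mathcal U^{Z_1}$ and $\mathcal W^{Z_2}$. The Haar projection of the product group $Z_1 \times Z_2$ acting on the tensor product is then $P_1 \otimes P_2$, by Fubini. Its image is $\operatorname{im} P_1 \otimes \operatorname{im} P_2$. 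This identifies the $Z_1\times Z_2$-fixed vectors in the tensor space with the tensor product of the kernel-fixed spaces. For finite-dimensional $\mathcal U, \mathcal W$ this is purely algebraic. An alternative route is to fix a basis of $\mathcal W$ and note that $Z_1$ acts only on the first factor, which gives $(\mathcal U\otimes\mathcal W)^{Z_1\times\{e\}} = \mathcal U^{Z_1}\otimes \mathcal W$; the same argument on the second factor finishes it.

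Finally, I would take $L$-invariants of this identity. By Lemma~\ref{lem:induced_action}, $K_i$ preserves $\mathcal U^{Z_1}$ and $\mathcal W^{Z_2}$ respectively, with induced actions $\tilde\varphi_1, \tilde\varphi_2$. A vector $v \in \mathcal U^{Z_1}\otimes \mathcal W^{Z_2}$ is then $H$-fixed exactly when $(g_1,g_2)v = v$ for all $(g_1,g_2)\in H$. For such an element with $\varphi_1(g_1) = \varphi_2(g_2) = l$, this action equals $(\tilde\varphi_1(l)\otimes\tilde\varphi_2(l))v$. So $H$-invariance on this subspace is exactly invariance under the diagonal $L$-action, and every $l\in L$ arises from some element of $H$ by surjectivity.

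Combining the two steps gives
\[
(\mathcal U\otimes\mathcal W)^H = \big((\mathcal U\otimes\mathcal W)^{Z_1\times Z_2}\big)^{H/(Z_1\times Z_2)} = (\mathcal U^{Z_1}\otimes\mathcal W^{Z_2})^L,
\]
which is \eqref{eq:fp_iso}, in fact an equality of subspaces rather than merely an isomorphism.
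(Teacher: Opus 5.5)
Your proposal is correct and follows essentially the same route as the paper. Both arguments pass to the $Z_1\times Z_2$-fixed vectors $\mathcal U^{Z_1}\otimes\mathcal W^{Z_2}$, note that on this subspace each $(g_1,g_2)\in H$ acts as $\tilde\varphi_1(l)\otimes\tilde\varphi_2(l)$ with $l=\varphi_1(g_1)$, and conclude by surjectivity of $H\to L$; your extra steps (justifying $(\mathcal U\otimes\mathcal W)^{Z_1\times Z_2}=\mathcal U^{Z_1}\otimes\mathcal W^{Z_2}$, which the paper only asserts, and identifying the kernel of $H\to L$ as $Z_1\times Z_2$) are valid refinements that do not change the argument.
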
\begin{proof}The condition for a vector $v \in \mathcal{U} \otimes \mathcal{W}$ to be fixed by $H$ is that $(g_1, g_2) \cdot v = v$ for all $(g_1, g_2) \in H$. First, observe that $H$ contains the subgroups $Z_1 \times \{e\}$ and $\{e\} \times Z_2$. In particular, for any $z \in Z_1$, we have $\varphi_1(z) = e_L = \varphi_2(e)$, implying $(z, e) \in H$. Thus, any $H$-fixed vector must necessarily be fixed by $Z_1 \times Z_2$. The fixed-point space of a direct product acting on a tensor product decomposes as the tensor product of fixed-point spaces, i.e.
\[
(\mathcal{U} \otimes \mathcal{W})^{Z_1 \times Z_2} = \mathcal{U}^{Z_1} \otimes \mathcal{W}^{Z_2}.
\]
Now, consider the surjective homomorphism $\Psi: H \to L$ defined by $\Psi(g_1, g_2) = \varphi_1(g_1)$. This is well-defined because $\varphi_1(g_1) = \varphi_2(g_2)$ for all 
$(g_1, g_2) \in H$, so we can take the first projection without any loss of generality. 
For any pure tensor $v = u \otimes w \in \mathcal{U}^{Z_1} \otimes \mathcal{W}^{Z_2}$, the action of an element $h \in H$ is identical to the diagonal action of its image $l = \Psi(h) \in L$. Specifically, using the induced representations $\tilde{\varphi}_1$ and $\tilde{\varphi}_2$ from Lemma~\ref{lem:induced_action}, one has
\[
h \cdot (u \otimes w) = (g_1 \cdot u) \otimes (g_2 \cdot w) = (\tilde{\varphi}_1(l) \cdot u) \otimes (\tilde{\varphi}_2(l) \cdot w) =: l \cdot (u \otimes w).
\]
By linearity, this extends to all of $\mathcal{U}^{Z_1} \otimes \mathcal{W}^{Z_2}$. Since the map $\Psi$ is surjective, verifying $h \cdot v = v$ for all $h \in H$ is equivalent to verifying $l \cdot v = v$ for all $l \in L$, i.e. a vector in $\mathcal{U}^{Z_1} \otimes \mathcal{W}^{Z_2}$ is fixed by $H$ if and only if it is fixed by $L$.
\end{proof}
\vs
The isomorphism established in Theorem \ref{thm:amalgamated_fp} allows us to compute the dimension of the fixed-point space using the character theory of the finite group $L$.
\begin{corollary}[Dimension Parity Condition] \label{cor:dim_parity}Let $\operatorname{Irr}(L) = \{\mathcal L_i\}$ denote the set of irreducible $L$-representations. If the kernel-fixed-point spaces $\mathcal{U}^{Z_1}$ and $\mathcal{W}^{Z_2}$ admit $L$-isotypic decompositions of the form
\[
\mathcal{U}^{Z_1} \simeq \bigoplus_{i \in \mathcal I} a_i \mathcal L_i \quad \text{and} \quad \mathcal{W}^{Z_2} \simeq \bigoplus_{j \in \mathcal I} b_j \mathcal L_j,
\]
then the dimension of the $H$-fixed-point space in $\mathcal U \otimes \mathcal W$ is given by:
\begin{equation} \label{eq:dim_formula}
\dim (\mathcal{U} \otimes \mathcal{W})^H = \sum_{i \in \mathcal I} a_i b_i d_i, \quad d_i := \dim \operatorname{End}_L(\mathcal{L}_i).
\end{equation}
(Note: For the standard representations of geometric groups, such as $D_N$, $\mathbb Z_N$ or $O(2)$, we almost always have $d_i = 1$).
\end{corollary}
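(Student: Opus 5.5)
By Theorem~\ref{thm:amalgamated_fp}, the plan is to compute $\dim (\mathcal U^{Z_1} \otimes \mathcal W^{Z_2})^L$ with $L$ acting diagonally. For a finite (or compact) group $L$ and real representations $X$ and $Y$, we have $(X \otimes Y)^L \simeq \operatorname{Hom}_L(X^*, Y)$. Since every orthogonal representation is self-dual via its invariant inner product, $X^* \simeq X$ as $L$-representations. Hence
\[
\dim (\mathcal U^{Z_1} \otimes \mathcal W^{Z_2})^L = \dim \operatorname{Hom}_L(\mathcal U^{Z_1}, \mathcal W^{Z_2}).
\]
I first need the induced $L$-actions $\tilde\varphi_1$ and $\tilde\varphi_2$ of Lemma~\ref{lem:induced_action} to be orthogonal. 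This holds because they are restrictions of orthogonal $K_1$- and $K_2$-actions to invariant subspaces.

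Next, I substitute the isotypic decompositions $\mathcal U^{Z_1} \simeq \bigoplus_i a_i \mathcal L_i$ and $\mathcal W^{Z_2} \simeq \bigoplus_j b_j \mathcal L_j$. Bilinearity of $\operatorname{Hom}_L$ with respect to direct sums gives
\[
\dim \operatorname{Hom}_L(\mathcal U^{Z_1}, \mathcal W^{Z_2}) = \sum_{i,j} a_i b_j \dim \operatorname{Hom}_L(\mathcal L_i, \mathcal L_j).
\]
Schur's Lemma over $\br$ then says that $\operatorname{Hom}_L(\mathcal L_i, \mathcal L_j) = 0$ for $i \neq j$. For $i = j$ it is the division algebra $\operatorname{End}_L(\mathcal L_i)$, which is $\br$, $\bc$ or $\mathbb H$, of dimension $d_i \in \{1,2,4\}$. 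The sum therefore collapses to $\sum_i a_i b_i d_i$, which is \eqref{eq:dim_formula}.

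The only step requiring care is the self-duality identification $(X \otimes Y)^L \simeq \operatorname{Hom}_L(X,Y)$ in the real setting. I would make it explicit by sending $x \otimes y$ to the map $x' \mapsto \langle x', x\rangle\, y$. This is a linear isomorphism $X \otimes Y \to \operatorname{Hom}(X,Y)$, and orthogonality of the $L$-action on $X$ makes it $L$-equivariant, where $L$ acts on $\operatorname{Hom}$ by conjugation. Taking $L$-fixed points on both sides then gives exactly $\operatorname{Hom}_L(X,Y)$. Everything else is routine bookkeeping, and the parenthetical remark that $d_i = 1$ is simply the absolutely irreducible case.
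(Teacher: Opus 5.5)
Your proposal is correct and follows the paper's proof essentially step for step. You reduce via Theorem~\ref{thm:amalgamated_fp} to $L$-invariants, identify $(\mathcal U^{Z_1}\otimes\mathcal W^{Z_2})^L$ with $\operatorname{Hom}_L(\mathcal U^{Z_1},\mathcal W^{Z_2})$ using self-duality of orthogonal representations, expand over the isotypic decompositions, and collapse the double sum with Schur's Lemma. Your explicit equivariant isomorphism $x\otimes y\mapsto \langle\cdot,x\rangle\,y$ and your check that the induced $L$-actions are orthogonal are details the paper leaves to a citation.
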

\begin{proof}
For any two real $L$-representations, there is a canonical isomorphism between the space of invariants in their tensor product and the space of equivariant linear maps (homomorphisms) between the factors (see \cite{tomDieck_rep} p. 26, Note 1.8.4), i.e.
\[
(\mathcal{U}^{Z_1} \otimes \mathcal{W}^{Z_2})^L \simeq \operatorname{Hom}_L((\mathcal{U}^{Z_1})^*, \mathcal{W}^{Z_2}).
\]
Since the representations are orthogonal, $(\mathcal{U}^{Z_1})^* \simeq \mathcal{U}^{Z_1}$, one has
\[
\dim (\mathcal{U}^{Z_1} \otimes \mathcal{W}^{Z_2})^L = \dim \operatorname{Hom}_L(\mathcal{U}^{Z_1}, \mathcal{W}^{Z_2}).
\]
Substituting our $L$-isotypic decompositions and accounting for $L$-isotypic multiplicities, yields
\begin{align*}
 \operatorname{Hom}_L\left( \bigoplus_{i \in \mathcal I} a_i \mathcal{L}_i, \bigoplus_{j \in \mathcal I} b_j \mathcal{L}_j \right) &\simeq \bigoplus_{i \in \mathcal I} \bigoplus_{j \in \mathcal I} \operatorname{Hom}_L(a_i \mathcal{L}_i, b_j \mathcal{L}_j) \\
 &\simeq \bigoplus_{i, j} a_i b_j \operatorname{Hom}_L(\mathcal{L}_i, \mathcal{L}_j).
\end{align*}
In the case of $i \neq j$, $\mathcal{L}_i$ and $\mathcal{L}_j$ are non-equivalent irreducible $L$-representations, so by Schur's lemma, we are guaranteed $\dim \operatorname{Hom}_L(\mathcal{L}_i, \mathcal{L}_j) = 0$. On the other hand, when $i=j$, the maps from an irreducible representation to itself form the endomorphism ring, such that $\dim \operatorname{Hom}_L(\mathcal{L}_i, \mathcal{L}_i) = d_i$. Taking dimensions on both sides, we are left only with the diagonal elements and the relation \eqref{eq:dim_formula}.
\end{proof}

\begin{corollary}[Amalgamated Parity Condition] \label{cor:amalgamated_parity}
Under the hypotheses of Corollary~\ref{cor:dim_parity}, the dimension of the $H$-fixed-point space is odd,
\[
\dim (\mathcal{U} \otimes \mathcal{W})^H \equiv 1 \pmod 2,
\]
if and only if there exist an \textbf{odd} number of $L$-isotypic indices $i \in \mathcal{I}$ satisfying the following three conditions simultaneously:
\begin{enumerate}
    \item[(i)] $a_i \equiv 1 \pmod 2$;
    \item[(ii)] $b_i \equiv 1 \pmod 2$;
    \item[(iii)] $d_i = 1$ (i.e., $\mathcal{L}_i$ is absolutely irreducible over $\mathbb{R}$).
\end{enumerate}
\end{corollary}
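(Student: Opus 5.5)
The plan is to start from the dimension formula \eqref{eq:dim_formula} of Corollary~\ref{cor:dim_parity} and reduce the claim to a parity count, in the same way as the proof of the Parity Criterion (Theorem~\ref{thm:parity_guarantee}). By Corollary~\ref{cor:dim_parity},
\[
\dim(\mathcal U\otimes\mathcal W)^H=\sum_{i\in\mathcal I} a_i b_i d_i ,
\]
a finite sum of non-negative integers. Such a sum is odd if and only if an odd number of its terms are odd. A product of integers is odd if and only if every factor is odd. Hence $\dim(\mathcal U\otimes\mathcal W)^H$ is odd exactly when an odd number of indices $i$ satisfy $a_i\equiv 1$, $b_i\equiv 1$ and $d_i\equiv 1 \pmod 2$.

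It then remains to show that, for an irreducible real $L$-representation $\mathcal L_i$, the condition $d_i\equiv 1\pmod 2$ is equivalent to $d_i=1$, which is condition (iii). This is the main step. I would argue it through the structure of the endomorphism ring. By Schur's Lemma, every non-zero $L$-equivariant endomorphism of the irreducible $\mathcal L_i$ is invertible, so $\operatorname{End}_L(\mathcal L_i)$ is a finite-dimensional associative division algebra over $\br$. By Frobenius' theorem on real division algebras, it is isomorphic to $\br$, $\bc$ or the quaternions. Consequently $d_i\in\{1,2,4\}$, and $d_i$ is odd precisely when $d_i=1$.

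The last point to record is that $d_i=1$ is exactly the statement that $\mathcal L_i$ is absolutely irreducible. This follows from the standard fact that the complexification $\mathcal L_i\otimes\bc$ is irreducible if and only if $\operatorname{End}_L(\mathcal L_i)\simeq\br$. It matches the parenthetical remark in the statement, so the three conditions (i)--(iii) together capture exactly the odd terms of the sum.

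I expect the only genuinely non-formal ingredient to be the restriction $d_i\in\{1,2,4\}$. If one prefers to avoid citing Frobenius' theorem, a direct route is available. When $d_i>1$, the division algebra $\operatorname{End}_L(\mathcal L_i)$ contains $\bc$, so it is a (left) $\bc$-vector space and therefore has even real dimension. This already gives $d_i$ even whenever $d_i\neq 1$, which is all the parity argument needs.
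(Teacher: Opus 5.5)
Your proposal is correct and matches the paper's proof: you expand $\sum_i a_i b_i d_i$, note that the sum is odd exactly when an odd number of terms have all three factors odd, and use Frobenius' theorem ($d_i\in\{1,2,4\}$) to turn ``$d_i$ odd'' into $d_i=1$. Your optional fallback is also valid and needs slightly less than Frobenius, since only parity matters: any element outside $\br$ generates a copy of $\bc$ inside the division algebra $\operatorname{End}_L(\mathcal L_i)$, which then has even real dimension.
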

\begin{proof}
By Equation~\ref{eq:dim_formula}, the total dimension is the sum $\sum_i a_i b_i d_i$. A sum of integers is odd if and only if an odd number of its terms are odd. Furthermore, the product $a_i b_i d_i$ is odd if and only if all three of its factors are odd. By Frobenius' theorem, the endomorphism dimension $d_i \in \{1, 2, 4\}$, meaning $d_i$ is odd if and only if $d_i = 1$. Therefore, a term in the sum is odd exactly when $a_i$, $b_i$, and $d_i$ satisfy the three stated conditions.
\end{proof}
\section{The $G$-Equivariant Leray-Schauder Degree} \label{sec:appendix}
Given an isometric Banach $G$-representation $\mathscr H$ of functions taking values in an orthogonal $\Gamma$-representation $V$, the $G$-equivariant Leray-Schauder degree is a topological tool used to study the solution sets associated with equations of the form
\begin{align} \label{eq:1}
    \mathscr F(u) = 0, \quad u \in \mathscr H,
\end{align}
where $\mathscr F: \mathscr H \rightarrow \mathscr H$ is an operator satisfying the following conditions:
\begin{enumerate}[label=($B_\arabic*$)]
\item\label{c1} $\mathscr F$ is a $G$-equivariant completely continuous field;
\item\label{c2} there exists a sufficiently large $R > 0$ such that $\mathscr F$ is $B_R(0)$-admissibly $G$-homotopic to the identity operator $\operatorname{Id}: \mathscr H \rightarrow \mathscr H$;
\item\label{c3} $D \mathscr F(0): \mathscr H \rightarrow \mathscr H$ exists and the operator $\operatorname{Id} - D \mathscr F(0): \mathscr H \rightarrow \mathscr H$ is a $G$-equivariant compact field;
\item\label{c4} if $D \mathscr F(0): \mathscr H \rightarrow \mathscr H$ is an isomorphism, there exists a sufficiently small $\epsilon > 0$
such that $\mathscr F$ is $B_\epsilon(0)$-admissibly $G$-homotopic to $D \mathscr F(0)$.
\end{enumerate}
Condition \ref{c1} allows the problem of the existence of solutions for equation \eqref{eq:1} to be reformulated as a question concerning the non-triviality of the degree $\gdeg(\mathscr F, B(\mathscr H))$. In turn, Conditions \ref{c2}-\ref{c4} reduce calculation of $\gdeg(\mathscr F, B(\mathscr H))$ to a Burnside ring product involving a finite number of computationally simpler $G$-basic degrees.
\vs
\noi{\bf  Equivariant notation.}
Let $G$ be a compact Lie group. For any subgroup  $H \leq G$, we denote by $(H)$ its conjugacy class,
by $N(H)$ its normalizer by $W(H):=N(H)/H$ its Weyl group in $G$. The set of all subgroup conjugacy classes in $G$ is denoted by $\Phi(G):=\{(H): H\le G\}$ and has a natural partial order defined as follows
\[
(H)\leq (K) \iff \exists_{ g\in G}\;\;gHg^{-1}\leq K.
\]
As is possible with any partially ordered set, we extend the natural order over $\Phi(G)$ to a total order, which we indicate by $<$ to differentiate the two relations. Moreover, we put $\Phi_0 (G):= \{ (H) \in \Phi(G) : \text{$W(H)$  is finite}\}$ and, for any $(H),(K) \in \Phi_0(G)$, we denote by $n(H,K)$ the number of subgroups $\tilde K \leq G$ satisfying both $\wt K \in (K)$ and $H \leq \wt K$.
When considering a $G$-representation $\mathbb{V}$, we say that $(H)$ is \emph{maximal} in $\Phi_0(G; \mathbb{V})$ if any orbit type $(K) > (H)$ in $\mathbb{V}$ satisfies $(K) = (G)$.
\vs
Given a $G$-space $X$ with an element $x \in X$, we denote by
$G_{x} :=\{g\in G:gx=x\}$ the {\it isotropy group} of $x$
and we call $(G_{x}) \in \Phi(G)$  the {\it orbit type} of $x \in X$. Put $\Phi(G,X) := \{(H) \in \Phi_0(G)  : 
(H) = (G_x) \; \text{for some $x \in X$}\}$ and  $\Phi_0(G,X):= \Phi(G,X) \cap \Phi_0(G)$. For a subgroup $H\leq G$, the subspace $
X^{H} :=\{x\in X:G_{x}\geq H\}$ is called the {\it $H$-fixed-point subspace} of $X$. If $Y$ is another $G$-space, then a continuous map $f : X \to Y$ is said to be {\it $G$-equivariant} if $f(gx) = gf(x)$ for each $x \in X$ and $g \in G$.
\vs
\noi{\bf The Burnside ring.}
The free $\mathbb{Z}$-module $A(G) := \mathbb{Z}[\Phi_0(G)]$ has a natural ring structure when equipped with the multiplicative operation defined, for any pair of generators $(H),(K) \in \Phi_0(G)$, as follows
\begin{align*} 
    (H) \cdot (K) := \sum\limits_{(L) \in \Phi_0(G)} n_L(L), 
\end{align*}
where the coefficients $n_L \in \mathbb{Z}$ are given by the recurrence formula
\begin{align*} 
    n_L := \frac{n(L,H) |W(H)| n(L,K) |W(K)| - \sum_{(\tilde L) > (L)} n_{\tilde L} n(L,\tilde L) |W(\tilde L)|}{|W(L)|}.
\end{align*}
Any {\it Burnside ring} element $a \in A(G)$ can be expressed as a formal sum over some finite number of generator elements  
\[
a = n_1(H_1) + n_2(H_2) + \cdots + n_N(H_N),
\]
and we use the notation
\[
\operatorname{coeff}^H(a) := n_H,
\]
to specify the integer coefficient standing next to the generator element $(H) \in \Phi_0(G)$.
\vs
\noi{\bf An Axiomatic Construction of the $G$-equivariant Leray-Schauder degree.}
Let $\mathscr E$ be any isometric Banach $G$-representation. A map $f: \mathscr E \rightarrow \mathscr E$ is said to be a completely continuous $G$-equivariant field if it can be expressed in the form
\[
f(x) = x - F(x),
\]
for some compact $G$-equivariant map $F: \mathscr E \rightarrow \mathscr E$. Moreover, given an open bounded $G$-invariant set $\Om \subset \mathscr E$, a completely continuous $G$-equivariant field is said to be \emph{$\Om$-admissible} and the pair $(f,\Om)$ is called an \emph{admissible $G$-pair} if $f(x) \neq 0$ for all $x \in \partial \Om$. We denote by $\mathscr M^G(\mathscr E)$ the set of all admissible $G$-pairs in $\mathscr E$ and by $\mathscr M^G$ the set of all admissible $G$-pairs defined by taking a union over all isometric Banach $G$-representations as follows
\[
\mathscr M^G := \bigcup_{\mathscr E} \mathscr M^G(\mathscr E).
\]
The $G$-equivariant Leray-Schauder degree is defined as the unique map $\gdeg: \mathscr M^G \rightarrow A(G)$ that assigns to every admissible $G$-pair $(f,\Omega)$ the Burnside ring element
	\begin{align}
		\label{eq:G-deg0}\gdeg(f,\Omega)=\sum_{(H) \in \Phi_0(G)}%
		{n_{H}(H)},
	\end{align}
satisfying the three degree axioms
	\begin{itemize}
		\item[] \textbf{(Additivity)} 
  For any two  disjoint open $G$-invariant subsets
  $\Omega_{1}$ and $\Omega_{2}$ with
		$f^{-1}(0)\cap\Omega\subset\Omega_{1}\cup\Omega_{2}$, one has
		\begin{align*}
			\gdeg(f,\Omega)=\gdeg(f,\Omega_{1})+\gdeg
			(f,\Omega_{2}).
		\end{align*}

		\item[] \textbf{(Homotopy)} For any 
  $\Omega$-admissible $G$-homotopy, $h:[0,1]\times V\to V$, one has
		\begin{align*}
			\gdeg(h_{t},\Omega)=\mathrm{constant}.
		\end{align*}

		\item[] \textbf{(Normalization)}
  For any open bounded neighborhood of the origin in an isometric Banach $G$-representation $\mathscr E$ with the identity operator $\id:\mathscr E \rightarrow \mathscr E$, one has
		\begin{align*}
			\gdeg(\id,\Omega)=(G).
		\end{align*}
	\end{itemize}
The following are three additional properties of the map $\gdeg$ which can be derived from the axiomatic properties defined above (cf. \cite{book-new, AED}):		
\begin{itemize}
		\item[] \textbf{(Existence)} If  $n_{H} \neq0$ for some $(H) \in \Phi_0(G)$ in \eqref{eq:G-deg0}, then there
		exists $x\in\Omega$ such that $f(x)=0$ and $(G_{x})\geq(H)$.
		\item[] {\textbf{(Multiplicativity)}} For any $(f_{1},\Omega
		_{1}),(f_{2},\Omega_{2})\in\mathcal{M} ^{G}$,
		\begin{align*}
			\gdeg(f_{1}\times f_{2},\Omega_{1}\times\Omega_{2})=
			\gdeg(f_{1},\Omega_{1})\cdot \gdeg(f_{2},\Omega_{2}),
		\end{align*}
		where the multiplication `$\cdot$' is taken in the Burnside ring $A(G )$.

		\item[] \textbf{(Recurrence Formula)} For an admissible $G$-pair
		$(f,\Omega) \in \mathscr M^G(\mathscr E)$, the $G$-degree \eqref{eq:G-deg0} can be computed using the
		following Recurrence Formula:
		\begin{equation}
			\label{eq:RF-0}n_{H}=\frac{\deg(f^{H},\Omega^{H})- \sum_{(K)>(H)}{n_{K}\,
					n(H,K)\, \left|  W(K)\right|  }}{\left|  W(H)\right|  },
		\end{equation}
		where $\left|  X\right|  $ stands for the number of elements in the set $X$
		and $\deg(f^{H},\Omega^{H})$ is the Brouwer degree of the map $f^{H}%
		:=f|_{\mathscr E^{H}}$ on the set $\Omega^{H}\subset \mathscr E^{H}$.
	\end{itemize}
\noi{\bf Computation of the $G$-Equivariant Degree for a $G$-Equivariant Linear Isomorphism.} 
Given an orthogonal $G$-representation $V$ with a $G$-isotypic decomposition
\[
V \simeq \bigoplus_{i \in \mathcal I} V_i, \quad V_i \simeq m_i \mathcal V_i,
\]
and any bounded $G$-equivariant linear isomorphism $T:V\to V$, the computation of the $G$-equivariant Leray-Schauder degree $\gdeg(T, B(V)) \in A(G)$ generally requires evaluating the Recurrence Formula \eqref{eq:RF-0} sequentially down the isotropy lattice. However, when evaluating the coefficient of a maximal orbit type $(H) \in \Phi_0(G; V)$, the recurrence relation collapses, allowing the coefficient to be determined strictly by the parity of the negative spectrum of $T$. 
\vs
The following Lemma establishes a direct algebraic criterion for the non-trivial contribution of a maximal orbit type to the equivariant degree:
\begin{lemma}\label{lemm:nontriviality_conditions}
Let $(H) \in \Phi_0(G; V)$ be a \textbf{maximal} orbit type. The coefficient 
\[
n_H = \operatorname{coeff}^H \big(\gdeg(T, B(V))\big),
\]
is non-zero if and only if, for an \textbf{odd} number of non-trivial $G$-isotypic indices $i \in \mathcal{I} \setminus \{0\}$, the following two conditions are simultaneously satisfied:
\begin{enumerate}
    \item[(i)] The negative eigenspace of the restriction $T_i := T|_{V_i}: V_i \to V_i$ consists of an \textbf{odd} number of copies of the irreducible $G$-representation $\mathcal V_i$;
    \item[(ii)] the $H$-fixed point space in the irreducible representation $\mathcal V_i$ is \textbf{odd}-dimensional.
\end{enumerate}
\end{lemma}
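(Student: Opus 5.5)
The plan is to compute $n_H$ directly from the Recurrence Formula \eqref{eq:RF-0} and to show that, at a maximal orbit type, only the top coefficient $n_G$ survives in the correction sum. Write $E^-(T) \simeq \bigoplus_{i} c_i \mathcal V_i$ for the real negative (generalized) eigenspace of $T$. For any $K \leq G$ set $\mu_K := \dim E^-(T)^K = \sum_i c_i \dim \mathcal V_i^K$. Since $T$ commutes with $G$, the restriction $T^K := T|_{V^K}$ is an isomorphism of $V^K$. Its Brouwer degree on $B(V^K)$ is the sign of its determinant. Complex-conjugate pairs contribute positively, so
\[
\deg(T^K, B(V^K)) = (-1)^{\mu_K}.
\]
For the top element the correction sum is empty, and $|W(G)| = 1$, so $n_G = (-1)^{\mu_G}$.

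The main step is to show that $n_K = 0$ for every $(K) \in \Phi_0(G)$ with $(H) < (K) < (G)$. This includes classes that are not orbit types of $V$; I expect this to be the only real obstacle. The key geometric observation comes first. Take such a $K$ and any $x \in V^K$. Then $(G_x) \geq (K) > (H)$ and $(G_x)$ is an orbit type of $V$. By maximality of $(H)$ this forces $G_x = G$, so $V^K = V^G$ and hence $\mu_K = \mu_G$. I would then run a downward induction along the total order, exactly as in the proof of Proposition~\ref{prop:spectral_flow_evaluation}. Assume $n_{\tilde K} = 0$ for all $(K) < (\tilde K) < (G)$. The recurrence at $K$ reduces to $n_K |W(K)| = (-1)^{\mu_K} - n_G\, n(K,G)\,|W(G)|$. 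Since $n(K,G) = |W(G)| = 1$, this gives $n_K |W(K)| = (-1)^{\mu_K} - (-1)^{\mu_G} = 0$. Here I need that all classes between $(K)$ and $(G)$ also lie strictly above $(H)$, which holds by transitivity.

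With the intermediate terms gone, the recurrence at $H$ collapses to
\[
n_H = \frac{(-1)^{\mu_H} - (-1)^{\mu_G}}{|W(H)|},
\]
which is well defined because $(H) \in \Phi_0(G)$ has finite Weyl group. Thus $n_H \neq 0$ if and only if $\mu_H - \mu_G$ is odd. Now $\mathcal V_0^H = \mathcal V_0^G$ and $\mathcal V_i^G = 0$ for $i \neq 0$, so
\[
\mu_H - \mu_G = \sum_{i \in \mathcal I \setminus\{0\}} c_i \dim \mathcal V_i^H .
\]
This is precisely the computation in the proof of Theorem~\ref{thm:parity_guarantee}, applied to the representation $E^-(T)$ in place of $\mathbb V$. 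By the same parity argument, the sum is odd if and only if an odd number of indices $i \neq 0$ have both $c_i$ odd and $\dim \mathcal V_i^H$ odd. Those two conditions are exactly (i) and (ii) of the lemma.
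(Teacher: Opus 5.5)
Your proposal is correct and follows essentially the same route as the paper. Maximality gives $V^K=V^G$ for every intermediate class, downward induction on the Recurrence Formula eliminates the intermediate coefficients so that $n_H=\bigl((-1)^{\mu_H}-(-1)^{\mu_G}\bigr)/|W(H)|$, and the parity count $\mu_H-\mu_G=\sum_{i\neq 0}c_i\dim\mathcal V_i^H$ finishes the proof; your justification of $V^K=V^G$ via the isotropy groups of points $x\in V^K$ makes explicit a step the paper only asserts, and, like the paper, you tacitly assume $(H)\neq(G)$, which the statement needs.
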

\begin{proof}
By the Recurrence Formula \eqref{eq:RF-0}, the coefficient $n_H$ is given by:
\[
n_H = \frac{\deg(T^H, B(V^H)) - \sum_{(K) > (H)} n_K n(H,K) |W(K)|}{|W(H)|}.
\]
Since $(H)$ is a maximal orbit type in $\Phi_0(G; V)$, the fixed-point space satisfies $V^K = V^G$ for every subgroup $K$ strictly containing $H$. On the space $V^K$, the linear map restricts to $T^G$, yielding the Brouwer degree $\deg(T^K, B(V^K)) = \deg(T^G, B(V^G))$ for all $K > H$. By downward induction, this yields $n_G = \deg(T^G, B(V^G))$ and $n_K = 0$ for all $H < K < G$, simplifying the recurrence relation to:
\[
n_H = \frac{\deg(T^H, B(V^H)) - \deg(T^G, B(V^G))}{|W(H)|}.
\]
The local Brouwer degree of a non-singular linear operator on the unit ball is given by $\deg(T^H, B(V^H)) = (-1)^{\mu_-^H(T)}$, where $\mu_-^H(T)$ denotes the sum of the algebraic multiplicities of the strictly negative real eigenvalues of $T^H: V^H \to V^H$ {\red (since any complex eigenvalues occur in conjugate pairs, their product is strictly positive; consequently, they preserve the sign of the determinant and are excluded from the parity count)}. Similarly, $\deg(T^G, B(V^G)) = (-1)^{\mu_-^G(T)}$.
Thus, $n_H \neq 0$ if and only if $\mu_-^H(T)$ and $\mu_-^G(T)$ have opposite parities, i.e., $\mu_-^H(T) - \mu_-^G(T) \equiv 1 \pmod 2$.
We can compute this difference by summing the negative eigenvalue contributions from each non-trivial $G$-isotypic component $V_i$. Since $V^G$ consists entirely of trivial representations, $\mu_-^H(T) - \mu_-^G(T) = \sum_{i \neq 0} c_i \dim \mathcal{V}_i^H$, where $c_i$ is the number of irreducible copies of $\mathcal{V}_i$ in the negative eigenspace of $T_i$.
This sum is an odd integer if and only if an odd number of products $c_i \dim \mathcal{V}_i^H$ are odd.
A product is odd iff both factors are odd, which corresponds exactly to the simultaneous satisfaction of conditions (i) and (ii) for $i \neq 0$.
\end{proof}

\section{The Mod-2 $G$-Equivariant Spectral Flow} \label{sec:appendix_esf}
While the classical equivariant spectral flow (see \cite{Izydorek2021}) takes values in the real representation ring $RO(G)$ over the integers, the topological properties of the equivariant Leray--Schauder degree---specifically its reliance on the local Brouwer degree on fixed-point spaces---naturally restrict the relevant spectral information to a parity count. In this appendix, we formally define the spectral flow representation $\mathbb{V}_{[a,b]}$ as an element of a mod-2 representation module.

\vs
\noi{\bf The Mod-2 Representation Ring.}
Let $G$ be a compact Lie group and let $\operatorname{Irr}(G) = \{\mathcal{V}_i\}_{i \in \mathcal{I}}$ be a complete list of the $G$-isomorphism classes of irreducible $G$-representations. The standard real representation ring $RO(G)$ is the free $\mathbb{Z}$-module generated by $\operatorname{Irr}(G)$. To capture the parity of eigenvalue crossings, we define the \emph{mod-2 representation ring} as the free $\mathbb{Z}_2$-module obtained via the tensor product
\[
RO_2(G) := RO(G) \otimes_{\mathbb{Z}} \mathbb{Z}_2.
\]
Any element $[\mathbb{W}] \in RO_2(G)$ can be expressed as a formal sum over the irreducible generators
\[
[\mathbb{W}] = \sum_{i \in \mathcal{I}} \rho_i [\mathcal{V}_i], \quad \rho_i \in \mathbb{Z}_2.
\]
Equipped with the standard direct sum and tensor product of representations (evaluated modulo 2), $RO_2(G)$ inherently tracks the parity of $G$-isotypic multiplicities.

\vs
\noi{\bf The Mod-2 Spectral Index.}
Let $\mathscr{H}$ be an isometric Banach $G$-representation and $T: \mathscr{H} \to \mathscr{H}$ an equivariant, completely continuous linear isomorphism. The negative spectrum of $T$ is a finite-dimensional, $G$-invariant subspace $E^-(T) \subset \mathscr{H}$. We define the \emph{mod-2 spectral index} of $T$ as the projection of this negative eigenspace into the mod-2 representation ring:
\[
\mu_G(T) := [E^-(T)] \pmod{2} \;\in RO_2(G).
\]
Explicitly, if $E^-(T)$ admits the $G$-isotypic decomposition $E^-(T) \simeq \bigoplus_{i \in \mathcal{I}} c_i \mathcal{V}_i$, the mod-2 spectral index simplifies the multiplicities to $c_i \pmod{2}$.

\vs
\noi{\bf The Mod-2 Equivariant Spectral Flow.}
Consider the parametrized family of linearizations $\mathscr{A}(\alpha): \mathscr{H} \to \mathscr{H}$ defined in Section \ref{sec:bifurcation_setup}. Let $a, b \in \mathbb{R}$ be any two regular parameter values. The net topological change across the interval $[a,b]$ is governed by the eigenvalues crossing the origin. We define the \emph{mod-2 equivariant spectral flow} of $\mathscr{A}(\alpha)$ across $[a,b]$ as the difference of the boundary spectral indices:
\[
\mathbb{V}_{[a,b]} := \mu_G(\mathscr{A}(a)) - \mu_G(\mathscr{A}(b)) \;\in RO_2(G).
\]
Since the coefficients live in $\mathbb{Z}_2$, this difference is equivalent to the symmetric difference of the negative eigenspaces, explicitly yielding
\[
\mathbb{V}_{[a,b]} = \sum_{i \in \mathcal{I}} \rho_i [\mathcal{V}_i], \quad \rho_i \equiv c_i(a) - c_i(b) \pmod{2},
\]
which precisely recovers the operational definition of the spectral flow representation introduced in \eqref{def:spectral_flow_representation}. By embedding $\mathbb{V}_{[a,b]}$ within $RO_2(G)$, the Ize pair parity condition (Theorem \ref{thm:parity_guarantee}) can be understood algebraically as an evaluation of fixed-point dimensions on the support of this mod-2 invariant.

\end{document}